\font\de=cmssi12
\numberwithin{equation}{section}
\newcommand{\graph}{\operatorname{Graph}}
\newcommand{\eps} {\varepsilon}
\def \cT {{\mathcal T}}
\def \cX {{\mathcal X}}
\newtheorem{theorem}{Theorem}[section]
\newtheorem{proposition}[theorem]{Proposition}
\newtheorem{lemma}[theorem]{Lemma}
\newtheorem{definition}[theorem]{Definition}
\theoremstyle{remark}
\newtheorem{remark}[theorem]{Remark}
\def\fp{\noindent{\hfill $\fbox{\,}$}\medskip\newline}
\begin{document}


\author{F. Rodriguez Hertz}
\address{IMERL-Facultad de Ingenier\'\i a\\ Universidad de la
Rep\'ublica\\ CC 30 Montevideo, Uruguay.}
\email{frhertz@fing.edu.uy}\urladdr{http://www.fing.edu.uy/$\sim$frhertz}

\author{M. A. Rodriguez Hertz}
\address{IMERL-Facultad de Ingenier\'\i a\\ Universidad de la
Rep\'ublica\\ CC 30 Montevideo, Uruguay.}
\email{jana@fing.edu.uy}\urladdr{http://www.fing.edu.uy/$\sim$jana}

\author{A. Tahzibi}
\address{Departamento de Matem\'atica,
  ICMC-USP S\~{a}o Carlos, Caixa Postal 668, 13560-970 S\~{a}o
  Carlos-SP, Brazil.}
\email{tahzibi@icmc.sc.usp.br}\urladdr{http://www.icmc.sc.usp.br/$\sim$tahzibi}

\author{R. Ures}
\address{IMERL-Facultad de Ingenier\'\i a\\ Universidad de la
Rep\'ublica\\ CC 30 Montevideo, Uruguay.} \email{ures@fing.edu.uy}
\urladdr{http://www.fing.edu.uy/$\sim$ures}
\keywords{}

\subjclass{Primary: 37D25. Secondary: 37D30, 37D35.}

\renewcommand{\subjclassname}{\textup{2000} Mathematics Subject Classification}

\date{\today}

\setcounter{tocdepth}{2}

\title{Creation of blenders in the conservative setting}
\maketitle
\begin{abstract}
In this work we prove that each $C^r$ conservative diffeomorphism with a pair of hyperbolic periodic points of co-index one can be $C^1$-approximated by $C^r$ conservative diffeomorphisms having a blender.
\end{abstract}
\section{Introduction}

One major task in the theory of dynamics is to establish some kind of dynamic irreducibility
of a system. Of principal interest are the systems that display some kind of persistent irreducibility.
The two main examples of this concept are robust transitivity and stable ergodicity.\par
Blenders were introduced by C. Bonatti and L. D\'{\i}az in \cite{bonattidiaz1996}, to produce a large class of examples of non-hyperbolic robustly transitive diffeomorphisms. There, they showed that these objects appeared in a neighborhood of the time-one map of any transitive Anosov flow. These systems are partially hyperbolic. To establish robust transitivity, they showed that the strong invariant manifolds entered a small ball (the {\em blender}), where things got mixed. This blending was then distributed all over the manifold by means of the strong invariant manifolds. This phenomenon is robust, whence they get robust transitivity. \par
In \cite{bonatti-diaz2006}, C. Bonatti and L. D\'{\i}az showed that blenders appear near co-index one heterodimensional cycles. This provides a local source of robust transitivity. \par
In \cite[Theorem C]{rhrhtu2009}, the authors showed that, surprisingly, blenders also provide a local source of stable ergodicity. This arouses some interest in the appearance of blenders in the conservative setting. Indeed, the presence of blenders near pairs of periodic points of co-index one in the conservative setting, allows the authors to prove a special case of a longstanding conjecture by C. Pugh and M. Shub, namely that
stable ergodicity is $C^1$-dense among partially hyperbolic diffeomorphisms with 2-dimensional center bundle \cite{rhrhtu2009}.\par
The aim of this paper is to obtain conservative diffeomorphisms admitting blenders near conservative diffeomorphisms with a pair of hyperbolic periodic points with co-index one. This result is crucial in the proof of the Pugh-Shub Conjecture \cite{rhrhtu2009}. Let us remark that, in fact, what we need for the proof of the Pugh-Shub Conjecture is a very special case of Theorem \ref{main.theorem}. We think that in view of the new importance of blenders for the conservative setting, it is interesting to state the result in its full generality.\par
The main result of this paper is the following.
\begin{theorem}\label{main.theorem} Let $f$ be a $C^r$ diffeomorphism preserving a smooth measure $m$ such that $f$ has two hyperbolic periodic points $p$ of index $(u+1)$ and $q$ of index $u$. Then there are $C^r$ diffeomorphisms arbitrarily $C^1$-close to $f$ which preserve $m$ and admit a $cu$-blender associated to the analytic continuation of $p$.
\end{theorem}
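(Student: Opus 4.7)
The plan is to reduce the problem to a local construction near a heterodimensional cycle between $p$ and $q$, and then to carry out the Bonatti--D\'iaz blender construction inside the volume-preserving category. Writing $d=\dim M$, one has $\dim W^s(p)+\dim W^u(q)=d+1$ while $\dim W^u(p)+\dim W^s(q)=d-1$, so that one side of any potential cycle is generically transverse along a curve and the other is forced to be quasi-transverse (a point).

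First, I would produce a heterodimensional cycle between the continuations of $p$ and $q$ by a $C^1$-small conservative perturbation of $f$. The intersection $W^s(p)\cap W^u(q)$ is created, or made transverse, by a conservative connecting-lemma argument, and a second conservative connecting step then produces the quasi-transverse intersection $W^u(p)\cap W^s(q)$. After passing to a suitable iterate and applying a further $C^1$-small volume-preserving Franks-type perturbation, I would linearize $f$ in small balls around the two fixed points and put the transitions along the cycle in a convenient normal form. Since the indices differ by one, there is a canonically defined one-dimensional center direction along the cycle, and it is in this direction that the blender's covering property must be arranged.

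The heart of the argument is then to unfold the quasi-transverse intersection by a $C^1$-small, divergence-free perturbation supported in a small ball around a heteroclinic point, producing a horseshoe $\Lambda$ containing the continuation of $p$ whose local center-unstable leaves robustly cover a one-parameter family of admissible disks transverse to the center direction. In the dissipative setting this is exactly the construction of an affine $cu$-blender in \cite{bonatti-diaz2006}; since the blender property is $C^1$-open, the resulting horseshoe persists as a $cu$-blender under further small $C^r$ conservative perturbations, and one obtains the required $C^r$ volume-preserving diffeomorphism arbitrarily $C^1$-close to $f$.

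The main obstacle, and the essentially new content in the conservative setting, is to realize the Bonatti--D\'iaz iterated-function-system geometry on the center direction by a perturbation that is divergence-free, rather than an arbitrary compactly supported bump. I expect to handle this by combining a Moser-type lemma with local generating-function techniques, in order to convert a free-form $C^1$-small perturbation into a volume-preserving one with essentially the same action on the relevant invariant manifolds. The conservative connecting-lemma and Franks-type approximation steps must likewise be applied in their measure-preserving versions, which are available but must be used carefully so that the linear models and the cycle geometry produced earlier in the construction are preserved.
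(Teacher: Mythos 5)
Your overall skeleton (conservative connecting lemma to build a co-index one cycle, local normal form, unfolding of the quasi-transverse intersection \`a la Bonatti--D\'iaz, openness of the blender property) is the right one, but the proposal has two genuine gaps. First, the claim that ``since the indices differ by one, there is a canonically defined one-dimensional center direction along the cycle'' is false as stated: the extra eigenvalue of $Df^{\pi(p)}(p)$ (expanding) and of $Df^{\pi(q)}(q)$ (contracting) may be complex or have modulus equal to that of a neighboring eigenvalue, in which case no invariant one-dimensional center bundle exists and the affine model cannot be set up. An explicit perturbation argument is needed to make the central eigenvalues real and simple while keeping the new points homoclinically related to $p$ and $q$ (this is Theorem \ref{teo.real.central.eigenvalues} and Lemma \ref{lemma.real.central.eigenvalues}, proved with the conservative Franks' Lemma); the homoclinic relation is then what lets you transfer the blender back to the continuation of $p$ (Remark \ref{remark.blender.homoclinic}), a point your proposal also leaves unaddressed since the blender is produced near a new periodic point, not one literally containing $p$. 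Second, a ``Franks-type perturbation'' cannot linearize $f$ on a neighborhood: Franks' Lemma only prescribes derivatives along a finite orbit segment. The tool that makes the whole affine model (linearization near $p$ and $q$, affine transitions, invariance of the strong foliations, isometric center action) realizable in the conservative category is the Arbieto--Matheus Pasting Lemmas (Theorems \ref{pasting.lemma.para.campos} and \ref{pasting.lemma.para.difeos}); note that Sternberg linearization is unavailable here because volume preservation forces a resonance. Your proposed ``Moser-type lemma with generating functions'' is exactly the crux you identify as the main obstacle, but it is left unproved, and it is precisely where the paper's new content lies.

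Two further points. The passage from the unfolded cycle to the blender is not a black-box application of \cite{bonatti-diaz2006}: the delicate step (Theorem \ref{teo.strong.homoclinic.intersection}) is to choose the unfolding parameter $t$ and itinerary lengths $m,n,n'$ so that the center return maps $\psi^{m,n}_t$ fix $y^+$ for two distinct itineraries with $\mu_0^n\lambda_0^m>1$, which may require adjusting $\lambda$ and $\mu$ themselves by a further conservative perturbation; only then do the $\lambda$-lemma and the Markov structure on the periodic $E^{ss}\oplus E^{uu}$ plane yield the strong homoclinic intersection to which \cite{bonattidiazviana1995} applies. Finally, your dimension count is reversed: $\dim W^s(p)+\dim W^u(q)=d-1$ (this is the quasi-transverse pair) while $\dim W^u(p)+\dim W^s(q)=d+1$ (transverse along a curve); the slip does not derail the strategy but it matters for knowing which intersection must be created by the connecting lemma and which one is unfolded.
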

The proof of Theorem \ref{main.theorem} closely follows the scheme in \cite{bonatti-diaz2006}.
\section{Sketch of the proof}\label{section.sketch}
All preliminary concepts are in Section \ref{section.preliminaries}. Let $f$ be a $C^r$ diffeomorphism preserving a smooth measure $m$ such that $f$ has two hyperbolic periodic points $p$ of index $(u+1)$ and $q$ of index $u$. A first step is to prove:
\begin{proposition}\label{proposition.heterodimensional.cycle} Let $f$ be a $C^r$ conservative diffeomorphism such that $f$ has two hyperbolic periodic points $p$ of index $(u+1)$ and $q$ of index $u$, then $C^1$-close to $f$ there is a $C^r$ conservative diffeomorphism such that the analytic continuations of $p$ and $q$ form a co-index one heterodimensional cycle.
\end{proposition}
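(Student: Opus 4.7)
The strategy would follow the Bonatti--D\'{\i}az scheme of \cite{bonatti-diaz2006} adapted to the volume-preserving category: produce the two branches of the cycle one at a time by means of a conservative connecting lemma. Because $f$ preserves the smooth measure $m$, Poincar\'e recurrence forces the entire ambient manifold to coincide with the chain recurrent set of $f$; in particular $p$ and $q$ always lie in a common chain class, so both pairs $(W^u(p),W^s(q))$ and $(W^u(q),W^s(p))$ can be joined by $\eps$-pseudo-orbits for every $\eps>0$. This removes the only global hypothesis needed to invoke a connecting lemma in each of the two steps below.

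First I would create the ``large'' leg of the cycle. Since $\dim W^u(p)+\dim W^s(q)=(u+1)+(d-u)=d+1$, a generic intersection is one-dimensional and transverse. Applying the volume-preserving Hayashi-type connecting lemma (Arnaud's conservative version, as refined later by Bonatti and Crovisier) yields a $C^1$-small conservative perturbation $f_1$ of $f$ with $W^u(p_{f_1})\cap W^s(q_{f_1})\ne\emptyset$, which after one further small perturbation supported away from a chosen heteroclinic point becomes a transverse intersection; transversality is then robust under all subsequent $C^1$-small perturbations.

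Next I would produce the ``small'' leg. Here $\dim W^s(p)+\dim W^u(q)=(d-u-1)+u=d-1$, so a true transverse intersection is impossible and the best one can hope for is a quasi-transverse intersection at isolated points. A second application of the conservative connecting lemma, this time supported inside a perturbation box whose forward and backward $f_1$-orbits stay disjoint from a small neighborhood of a chosen transverse heteroclinic orbit obtained in the first step, gives $f_2$ arbitrarily $C^1$-close to $f_1$ with $W^s(p_{f_2})\cap W^u(q_{f_2})\ne\emptyset$. A final localized perturbation places the tangent spaces in general position so that the new intersection is quasi-transverse.

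The main obstacle is technical rather than conceptual. First, the connecting lemmas used are only $C^1$ statements and only produce $C^1$ diffeomorphisms, so at the end one must promote the perturbation back to class $C^r$; this is handled by $C^1$-density of $C^r$ conservative diffeomorphisms in $C^1$ conservative diffeomorphisms (Zuppa/Avila regularization). Second, and more delicate, one must arrange that the second perturbation does not destroy the transverse leg built in the first step: this is arranged by confining the support of the second perturbation to a small ball whose orbit meets neither the neighborhood of the first heteroclinic orbit nor its relevant iterates, and invoking the $C^1$-openness of transverse intersections to absorb the resulting loss of precision.
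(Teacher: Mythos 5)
Your proposal is correct and follows essentially the same route as the paper: two applications of a conservative connecting lemma to create first the (robust, transverse) leg $W^u(p)\cap W^s(q)$ and then the quasi-transverse leg $W^s(p)\cap W^u(q)$, followed by Avila-type regularization to return to the $C^r$ conservative class. The only difference is in how you license the connecting lemma: you invoke the chain-recurrence version directly (via Poincar\'e recurrence and $\Omega(f)=M$), whereas the paper first perturbs to a Bonatti--Crovisier generic transitive diffeomorphism and uses the transitive version; both are legitimate. One point to tighten: since the quasi-transverse intersection is \emph{not} robust, performing the regularization ``at the end'' can destroy it, so you must either regularize before creating the second leg or, as the paper does, follow the regularization with one further smooth local perturbation that restores the intersection of $W^s(p)$ with the nearby $W^u(q)$.
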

The proof of this involves a combination of recurrence results by C. Bonatti and S. Crovisier, the Connecting Lemma, and a recent result by A. \'Avila concerning approximation of conservative $C^1$ diffeomorphisms by smooth conservative diffeomorphisms. We prove this proposition in Section \ref{section.real.central.eigenvalues}. \par
The goal of of the rest of the paper is to reduce this heterodimensional cycle to a standard form, in which perturbations are easily made. Note that in a co-index one heterodimensional cycle associated to the points $p$ and $q$ with periods $\pi(p)$ and $\pi(q)$, both $Df^{\pi(p)}(p)$ and $Df^{\pi(q)}(q)$ have $s$ contracting eigenvalues and $u$  expanding eigenvalues. There is a remaining (central) eigenvalue which is expanding for $Df^{\pi(p)}(p)$ and contracting for $Df^{\pi(q)}$. This center eigenvalue, however, could be complex or have multiplicity bigger than one. This could complicate getting a simplified model of the cycle. Next step is to show, like in \cite{bonatti-diaz2006}, the following:
\begin{theorem}\label{teo.real.central.eigenvalues} Let $f$ be a $C^r$ conservative diffeomorphism having a co-index one cycle associated to the periodic points $p$ and $q$. Then $f$ can be $C^1$-approximated by $C^r$ conservative diffeomorphisms having co-index one cycles with real central eigenvalues associated to the periodic points $p'$ and $q'$ which are homoclinically related to the analytic continuation of $p$ and $q$.
\end{theorem}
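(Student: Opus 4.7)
\noindent\emph{Proof plan.} The scheme is the one of Bonatti and D\'{\i}az in \cite{bonatti-diaz2006}; the only new ingredient is that every perturbation must be conservative. The obstacle to reality of the central eigenvalues is a nontrivial rotation in the central direction, and the idea is to produce, by shadowing the cycle, new periodic points $p'$ and $q'$ whose derivative on the center is a long concatenation of iterates of the central actions at $p$ and $q$ intertwined with the heteroclinic transitions. By suitably choosing how many iterates are spent near each point, the total rotation can be made arbitrarily small, and a small additional perturbation then renders the central spectrum real.

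First I would fix heteroclinic points $x\in W^u(p)\cap W^s(q)$ and $y\in W^s(p)\cap W^u(q)$ realising the cycle (the latter is necessarily quasi-transverse since the cycle has co-index one), and use the conservative version of Franks' lemma together with a pasting lemma to replace $f$ by a $C^1$-close conservative diffeomorphism which is affine on small neighbourhoods $U_p$ and $U_q$ of $p$ and $q$. Let $A_p$ and $A_q$ denote the central blocks of these linear parts, and $T_{pq}$, $T_{qp}$ the derivatives of the transitions along the orbits of $x$ and $y$, all restricted to the center. For all large $N,M$, a standard shadowing argument near the cycle produces a hyperbolic periodic point $p'_{N,M}$ whose orbit spends $N\pi(p)$ iterates in $U_p$, jumps along $x$, spends $M\pi(q)$ iterates in $U_q$, and returns along $y$; by construction $p'_{N,M}$ is homoclinically related to $p$, and an analogous construction (reversing the roles of $p$ and $q$) yields $q'$ homoclinically related to $q$.

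Modulo the strong stable and unstable bundles, the return map at $p'_{N,M}$ acts on the center by $A_p^{N}\,T_{qp}\,A_q^{M}\,T_{pq}$. In the delicate case where the central eigenvalues are complex, $A_p$ and $A_q$ act on a $2$-dimensional central subspace as a scaling composed with rotations by angles $\theta_p$ and $\theta_q$; by Kronecker's theorem, $(N,M)$ can be chosen so that the total rotation angle is arbitrarily close to $0$ modulo $\pi$, placing the central spectrum of $p'_{N,M}$ arbitrarily close to the real axis. A final conservative $C^1$-perturbation localised near a single point of the orbit (area-preserving rotations on the central $2$-plane are admissible, so the required freedom is available) then makes the central eigenvalues of $p'_{N,M}$ exactly real; the same argument applies to $q'$, and a further application of the conservative Connecting Lemma produces a co-index one cycle between $p'$ and $q'$. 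The case of multiplicity larger than one is treated by an analogous perturbation splitting the repeated eigenvalue.

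The main difficulty is that every step must preserve the smooth measure $m$ and must yield a $C^r$ diffeomorphism at the end. As in Proposition \ref{proposition.heterodimensional.cycle}, I would carry out the whole construction above in the $C^1$ conservative category, using the conservative versions of Franks' lemma, the pasting lemma and the Connecting Lemma, and then invoke \'Avila's $C^1$-approximation theorem to replace the resulting $C^1$ conservative diffeomorphism by a $C^r$ one arbitrarily $C^1$-close. Reality of the central eigenvalues, persistence of the cycle, and the homoclinic relations to $p$ and $q$ are all $C^1$-open properties, so they survive this final smoothing step.
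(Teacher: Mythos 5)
Your overall architecture (make the central eigenvalues real at new periodic points $p'$, $q'$ homoclinically related to the old ones, then rebuild the cycle with the conservative Connecting Lemma, then smooth with \'Avila) matches the paper. But the mechanism you use to produce $p'$ has a genuine gap: you cannot run ``a standard shadowing argument near the cycle'' to get periodic points $p'_{N,M}$ following the heterodimensional cycle. The cycle is not a hyperbolic set --- the intersection $W^s(p)\cap W^u(q)$ is only quasi-transverse, and along the cycle there is no invariant center bundle of consistent dimension on which your product $A_p^{N}T_{qp}A_q^{M}T_{pq}$ would act: in the problematic case the complex pair at $p$ is $\{\lambda^c(p),\lambda^u_1(p)\}$, spanning an \emph{expanding} $2$-plane, while at $q$ it is $\{\lambda^s_s(q),\lambda^c(q)\}$, spanning a \emph{contracting} $2$-plane. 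Consequently neither the existence of $p'_{N,M}$, nor its index, nor the claimed homoclinic relation to $p$ is automatic; producing periodic orbits that shadow the cycle is precisely the delicate content of Section \ref{section.strong.homoclinic.intersection}, and it is only carried out \emph{after} the cycle has been reduced to one with real central eigenvalues and then to a simple cycle. The paper avoids this circularity by working at each saddle separately (following Lemma 4.2 of \cite{diaz.pujals.ures1999} and Lemmas 1.9, 4.16 of \cite{bdp2003}): one creates a \emph{transverse homoclinic} point of $p$ via the Connecting Lemma, linearizes near $p$ with the Pasting Lemma, and uses the Birkhoff--Smale horseshoe to get $p'$ homoclinically related to $p$ whose derivative is $Df^r(q')\circ Df(p)^{n'}$; Franks-type perturbations first make the rotation angle rational (so a suitable power of the central block is a real scalar) and block-diagonalize the transition, and a last perturbation splits the repeated real eigenvalue. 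Your Kronecker step also glosses over the fact that the eigenvalues of a product involving non-conformal transition matrices are not controlled by summing rotation angles; this is why the paper normalizes the transition first.

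A secondary error: you assert that ``persistence of the cycle'' is a $C^1$-open property and hence survives the \'Avila smoothing. The transverse intersection $W^u(p')\pitchfork W^s(q')$ and the homoclinic relations are open, but the quasi-transverse intersection $W^s(p')\cap W^u(q')$ is not, and is in general destroyed by an arbitrarily small perturbation. The paper handles this in Lemma \ref{lemma.heterodimensional.cycle} by applying \'Avila's theorem \emph{before} the last connection and then performing one more ($C^\infty$, conservative) perturbation to recreate the quasi-transverse intersection. Your proof needs the same reordering.
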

The fact that $p'$ and $q'$ are homoclinically related to the analytic continuation of $p$ and $q$ is important due to the fact that if ${\rm Bl}^{cu}(p)$ is a $cu$-blender associated to $p$ and $p'$ is homoclinically related to $p$ then ${\rm Bl}^{cu}(p)$ is also a $cu$-blender associated to $p'$. See Remark \ref{remark.blender.homoclinic}.
The proof of Theorem \ref{teo.real.central.eigenvalues} is in Section \ref{section.real.central.eigenvalues}. \newline \par
The proof of Theorem \ref{main.theorem} shall now follow after proving Theorems \ref{teo.strong.homoclinic.intersection} and \ref{teo.blenders} below.
\begin{theorem}\label{teo.strong.homoclinic.intersection} Let $f$ be a $C^r$ conservative diffeomorphism having a co-index one cycle with real central eigenvalues. Then $f$ can be $C^1$-approximated by $C^r$ conservative diffeomorphisms having strong homoclinic intersections associated to to a hyperbolic periodic point with expanding real central eigenvalue. \end{theorem}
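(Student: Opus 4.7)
The plan is to adapt the construction of Bonatti-D\'{\i}az \cite{bonatti-diaz2006} to the conservative setting, step by step. First I would perform a preliminary sequence of volume-preserving $C^1$-small perturbations to put the cycle into a convenient form. By a standard Franks-type argument in the conservative category, we may assume that the eigenvalues of $Df^{\pi(p)}(p)$ are non-resonant in the sense of Sternberg, so that $f^{\pi(p)}$ admits a volume-preserving local linearization on a neighborhood $U$ of $p$; here one can combine a symplectic/volume-preserving Sternberg-type theorem with Moser's trick to correct the linearizing diffeomorphism into a volume-preserving one. Denote by $E^s_p\oplus E^c_p\oplus E^u_p$ the invariant splitting at $p$, with $\dim E^c_p=1$ and $E^c_p$ expanding; the strong invariant manifolds $W^{ss}(p)$ and $W^{uu}(p)$ are then well defined in $U$.

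Next, I would exploit the cycle. Pick points $z\in W^u(p)\ti W^s(q)$ (transverse, along an arc) and $y\in W^u(q)\cap W^s(p)$ (quasi-transverse). For $n$ large, $f^{-n}(z)$ enters $U$ and accumulates on $W^u_{\rm loc}(p)$, while $f^n(y)$ enters $U$ and accumulates on $W^s_{\rm loc}(p)$. In the linearizing chart, I would then follow Bonatti-D\'{\i}az to construct, for each sufficiently large $N$, a hyperbolic periodic point $p_N$ whose orbit spends most of its time near $p$ and performs one excursion through the cycle. The point $p_N$ inherits an expanding real central eigenvalue from $p$, and by construction is homoclinically related to the analytic continuation of $p$; hence its local strong invariant manifolds $W^{ss}(p_N)$ and $W^{uu}(p_N)$ are $C^1$-close to shifted pieces of $W^{ss}(p)$ and $W^{uu}(p)$.

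The geometric heart of the argument is then to arrange, after a conservative $C^1$-small perturbation, a strong homoclinic intersection for some $p_N$, i.e.\ a point in $W^{uu}(p_N)\cap W^s(p_N)$. In the linearizing chart the condition ``$W^{uu}(p_N)\cap W^s(p_N)\neq\emptyset$'' cuts out a codimension-one subset in the position of the cycle. On the other hand, the one-dimensional central direction $E^c_p$ provides a natural parameter along which the quasi-transverse intersection $y$ can be displaced. To realize such a displacement in a volume-preserving way, I would use a compactly supported Hamiltonian-type perturbation localized in a small ball around a fundamental domain of $y$, disjoint from the orbit of $p$ and from the transverse branch of the cycle; volume preservation is built in via Moser's lemma, and the $C^r$ regularity is restored, if needed, by \'Avila's smoothing result as in Proposition \ref{proposition.heterodimensional.cycle}.

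The main obstacle I expect is the purely geometric bookkeeping: one must check that, as the parameters $(N,\text{size of perturbation})$ vary over a small range, the relative position of $W^{uu}(p_N)$ and $W^s(p_N)$ along $E^c_p$ sweeps out an interval of positive length, so that for some choice of parameters a strong homoclinic intersection actually occurs. This mixing-of-scales estimate is where the real-eigenvalue hypothesis and the linearization are used crucially, since without them the rotation of $W^{uu}(p_N)$ inside $W^u(p)$ under iteration would prevent the required transversal sweep. A secondary technical point is to ensure that all perturbations are localized so that the quasi-transverse branch of the cycle and the homoclinic relation of $p_N$ with $p$ are preserved throughout.
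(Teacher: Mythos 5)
Your very first step already fails in the conservative category: you propose to make the eigenvalues of $Df^{\pi(p)}(p)$ Sternberg-non-resonant by a Franks-type perturbation and then linearize by a volume-preserving Sternberg theorem. But volume preservation forces $\prod_j\lambda_j=\pm1$, so each eigenvalue satisfies $\lambda_i=\lambda_i\prod_j\lambda_j^2$ (and similar relations); this resonance cannot be removed by any conservative perturbation. This is exactly the obstruction the paper points out (``unlike in Bonatti--D\'{\i}az, we cannot use Sternberg's Theorem to linearize, due to the obvious resonance''), and the fix is not a change of coordinates at all: one uses the Arbieto--Matheus Pasting Lemmas to \emph{perturb} $f$ so that it literally coincides with $Df(p)$ (resp.\ $Df(q)$) near $p$ (resp.\ $q$), and further perturbations make the two transition maps of the cycle affine, partially hyperbolic, and isometric in the center direction (a ``simple cycle''). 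Your Moser-trick patch does not repair this: correcting the linearizing conjugacy to be volume-preserving destroys the linearity of the conjugated map.

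The second, more structural gap is in how the strong homoclinic intersection is actually produced. After unfolding the cycle by a one-parameter family $f_t$ (your Hamiltonian-type displacement of the quasi-transverse branch is essentially the paper's composition with the time-$t$ map of a divergence-free field, so that part is fine), the return to a neighborhood of $p$ along an itinerary with $m$ steps near $q$ and $n$ steps near $p$ acts on the center coordinate by $\psi^{m,n}_t(y)=\mu^n\bigl[\lambda^m(y+\Delta y)+t\bigr]$. The paper's mechanism is to choose $\lambda_0,\mu_0,t$ and \emph{two distinct} itineraries $(m+1,n)$ and $(m,n')$ both fixing $y^+$ (Lemma 3.11 of Bonatti--D\'{\i}az, after possibly adjusting $\lambda,\mu$ conservatively so that $\mu_0^n\lambda_0^m>1$). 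This yields two periodic points lying in the \emph{same} invariant $E^{ss}\oplus E^{uu}$ plane, homoclinically related by transverse intersections inside that plane; the $\lambda$-lemma applied within the plane then forces a point of $W^{uu}(p_{m,n})\cap W^{ss}(p_{m,n})$ other than $p_{m,n}$. Your alternative --- a single family $p_N$ plus an intermediate-value ``sweep'' of the codimension-one condition $W^{uu}(p_N)\cap W^{ss}(p_N)\neq\emptyset$ --- is not carried out: you would have to show that the displacement function actually changes sign while $p_N$, its homoclinic relation to $p$, and the expanding center eigenvalue all persist over the parameter range, and it is precisely this bookkeeping that the two-itinerary trick is designed to avoid. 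Relatedly, ``$p_N$ inherits an expanding central eigenvalue'' is not automatic, since the excursion through the cycle contracts the center by $\lambda^m$; one must arrange $\mu^n\lambda^m>1$ compatibly with the closing conditions, which is the content of the quoted lemma.
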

Theorem \ref{teo.strong.homoclinic.intersection} is the more delicate part. Next theorem is now standard after C. Bonatti, L. D\'{\i}az and M. Viana's work \cite{bonattidiazviana1995}:
\begin{theorem}\label{teo.blenders} Let $f$ be a $C^r$ diffeomorphism preserving $m$ with a strong homoclinic intersection associated to hyperbolic periodic point $p$ with expanding real center eigenvalue. Then $f$ can be $C^1$-approximated by a $C^r$ diffeomorphism preserving $m$ and having a $cu$-blender associated to $p$.
\end{theorem}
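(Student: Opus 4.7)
The plan is to follow the dissipative blender construction of Bonatti--D\'\i az--Viana \cite{bonattidiazviana1995} step by step, upgrading each local perturbation to the conservative category: either by realizing it as the time-one flow of a (cut-off) divergence-free vector field, or by combining a conservative Franks-type pasting lemma with \'Avila's smoothing theorem to return from $C^1$ conservative to $C^r$ conservative. After passing to the iterate $f^{\pi(p)}$, a first such perturbation puts us in a model where, in a box $U\ni p$, the map has an invariant splitting $E^{ss}\oplus E^c\oplus E^{uu}$ with $\dim E^c=1$ and $Df|_{E^c}=\lambda>1$, and is affine (in linearizing coordinates) and $m$-preserving on $U$. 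The hypothesis supplies $x\in(W^{uu}(p)\cap W^s(p))\setminus\{p\}$; after iteration and shrinking of $U$, both $x$ and some forward iterate $y=f^N(x)$ lie in $U$, joined by a free orbit segment. Let $T=f^N$ denote the transition between small boxes $V_-\ni x$ and $V_+\ni y$.

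The central step of \cite{bonattidiazviana1995} is to perturb $T$ so that, in linearizing coordinates, the induced map on the central coordinate $t$ becomes a translation $t\mapsto t+v$ for a prescribed small $v\ne 0$. Since the perturbation may be supported near one interior point of the free segment, the required translation is produced by the time-one map of a compactly supported divergence-free vector field: one cuts off the constant field $v\,\partial_t$ by a bump function and adds a small transverse correction (solving a scalar cohomological equation) to restore zero divergence. This yields a $C^1$-small $m$-preserving perturbation, which \'Avila's theorem then approximates by a $C^r$ conservative diffeomorphism. After this step, the first-return map to a suitable subbox $B\subset U$ decomposes, in the center coordinate, into two affine branches $\varphi_0(t)=\lambda^k t$ and $\varphi_1(t)=\lambda^{k+1}t+v$, for any chosen large $k$.

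Taking $k$ large and adjusting $v$, the pair $(\varphi_0,\varphi_1)$ becomes an expanding iterated function system whose images cover a common interval $I\subset E^c$. The corresponding two-legged horseshoe in $B$ together with $I$ provides the activating open family of $(s+1)$-dimensional discs for a $cu$-blender whose saddle part contains the analytic continuation of $p$; the verification that this family is activating is a purely topological IFS argument, identical to the one in \cite{bonattidiazviana1995, bonatti-diaz2006}, and is unaffected by conservativity. The main obstacle throughout is the conservative realization of the central translation in the previous paragraph: in the dissipative case any compactly supported perturbation works, whereas here one must exhibit a divergence-free vector field whose time-one flow is a translation in one direction on a small box while being supported in a slightly larger one. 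This is possible because $v\,\partial_t$ is itself divergence-free and the bump cut-off can be compensated by a small, explicit transverse correction; once this is in place, the remainder of the argument is insensitive to the conservative hypothesis.
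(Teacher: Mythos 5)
Your proposal is correct and takes essentially the same route as the paper, which disposes of Theorem \ref{teo.blenders} in one sentence by citing \cite{bonattidiazviana1995}, Theorem 2.1 of \cite{bonattidiaz1996} and Section 4.1 of \cite{bonatti-diaz2006} and remarking that the perturbations can ``trivially'' be made $C^r$ and conservative; your write-up supplies exactly those details (divergence-free realization of the central translation, pasting, \'Avila smoothing) with the same toolkit the paper uses in Sections \ref{section.simple.cycle} and \ref{section.strong.homoclinic.intersection}. Two cosmetic slips: the hypothesis gives $x\in W^{uu}(p)\cap W^{ss}(p)\setminus\{p\}$ (strong stable, not $W^{s}(p)$), and the well-placed strips for a $cu$-blender are $(u+1)$-dimensional, not $(s+1)$-dimensional.
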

In the creation of conservative blenders, we closely follow the scheme in \cite{bonatti-diaz2006}. It will be
proved that after a $C^1$-perturbation, we obtain a $C^r$ conservative
diffeomorphism such that the co-index one cycle with real central
eigenvalue has local coordinates where the dynamics of the cycle is
affine and partially hyperbolic, with one dimensional central
direction. This is called a {\em simple cycle}, see Definition \ref{definition.simple.cycle}. Let us note that, unlike in \cite{bonatti-diaz2006}, we cannot use Sternberg's Theorem to linearize, due to the obvious resonance. We use instead the Pasting Lemmas by A. Arbieto and C. Matheus \cite{arbieto-matheus2006}. The construction of simple cycles $C^1$-close co-index one cycles with real center eigenvalues is proved in Section \ref{section.simple.cycle}. \par
We shall afterwards produce a continuous family of perturbations $\{f_t\}_{t>0}$ of $f$ shifting the unstable manifold of $q$ in $U_p$ so that it does not intersect $W^s(p)$, thus breaking the cycle. These perturbations preserve the bundles $E^{ss}$, $E^c$ and $E^{uu}$. In this form, they induce maps of the interval on $E^c$. By carefully choosing a small parameter $t>0$, and possibly ``touching" the center expansion/contraction, we may obtain that there is an $E^{ss}\oplus E^{uu}$ plane that is periodic by two different itineraries of $f_t$. Now, the dynamics $f_t$ on this periodic plane is hyperbolic, using a Markovian property, we get two periodic points that are homoclinically related within this plane. Using the $\lambda$-lemma we obtain a periodic point with a strong homoclinic intersection. This is done in Section \ref{section.strong.homoclinic.intersection}.\par
In this way we can apply the well-known techniques of Bonatti, D\'{\i}az and Viana \cite{bonattidiazviana1995} and of Bonatti, D\'{\i}az which give blenders near points with strong homoclinic intersections \cite{bonattidiaz1996}.
Note that these perturbations can be trivially made so that the resulting diffeomorphism be $C^r$ and conservative.
%

\section{Preliminaries}\label{section.preliminaries}
From now on, we shall consider a smooth measure $m$ on a smooth manifold $M$, and a $C^r$ diffeomorphism $f:M\to M$ preserving $m$.
%
\subsection{Definitions}
We shall say that $f$ is {\de conservative} if $f$ preserves $m$. Given a hyperbolic periodic point $p$ of $f$, the {\de index} of $p$ is number of expanding eigenvalues of $Df^{\pi(p)}(p)$, counted with multiplicity, where $\pi(p)$ is the period of $p$.
\begin{definition}[Heterodimensional cycle]\label{definition.heterodimensional.cycle}
A diffeomorphism $f$ has a heterodimensional cycle associated to
two hyperbolic periodic points $p$ and $q$ of $f$ if their indices are different, and
the stable manifold $W^s(p)$ of $p$ meets the unstable manifold $W^u(q)$ of $q$, and the unstable manifold $W^u(p)$ of $p$ meets the stable manifold $W^s(q)$ of $q$.
\end{definition}
When the indices of $p$ and $q$ differ in one, we say $p$ and $q$ are a {\de co-index one} heterodimensional cycle, or co-index one cycle. \par
We shall say that $f$ is {\de partially hyperbolic} on an $f$-invariant set $\Lambda$, or $\Lambda$ is {\de partially hyperbolic} for $f$ if there is a $Df$-invariant splitting $T_\Lambda=E^s_\Lambda\oplus E^c_\Lambda\oplus E^u_\Lambda$ such that for all $x\in \Lambda$ and all unit vectors $v^\sigma\in E^\sigma$, $\sigma=s,c,u$ we have:
$$\|Df(x)v^s\|<\|Df(x)v^c\|<\|Df(x)v^u\|$$
for some suitable Riemannian metric on $M$. We require that both $E^s$ and $E^u$ be non-trivial.\par
Our goal is to produce a $C^1$-perturbation admitting a blender. Here is the definition of blender we shall be using:
\begin{definition}[$cu$-blender near $p$]\label{definition.blender}
Let $p$ be a partially hyperbolic periodic point for $f$ such that $Df$ is expanding on $E^c$ and $\dim E^c=1$. A small open set ${\rm Bl}^{cu}(p)$, near $p$ but not necessarily containing $p$, is a {\de $cu$-blender associated to $p$} if:
\begin{enumerate}
\item every $(u+1)$-{\de strip well placed} in ${\rm Bl}^{cu}(p)$ transversely intersects $W^s(p)$.
\item This property is $C^1$-robust. Moreover, the open set associated to the periodic point contains a uniformly sized ball.
\end{enumerate}
A $(u+1)$-{\de strip} is any $(u+1)$-disk containing a $u$-disk $D^{uu}$, so that $D^{uu}$ is centered at a point in ${\rm Bl}^{cu}(p)$, the radius of $D^{uu}$ is much bigger than the radius of ${\rm Bl}^{cu}(p)$, and $D^{uu}$ is almost tangent to $E^u$, i.e. the vectors tangent to $D^{uu}$ are $C^1$-close to $E^u$. A $(u+1)$-strip is {\de well placed} in ${\rm Bl}^{cu}(p)$ if it is almost tangent to $E^c\oplus E^u$. See Fgure \ref{figure.blender}.
\end{definition}
Naturally, it makes sense to talk about robustness of these properties and concepts, since there is an analytic continuation of the periodic point $p$ and of the bundles $E^s$, $E^c$ and $E^u$. We can define $cs$-blenders in a similar way. For $cs$-blenders we will consider a partially hyperbolic point such that $E^c$ is one-dimensional and $Df$ is contracting on $E^c$. \par
\begin{figure}[h]
 \begin{minipage}{6.5cm}
 \psfrag{s}{\footnotesize $W^{ss}(p)$}\psfrag{p}{\footnotesize $p$}\psfrag{u}{\footnotesize $W^{uu}(p)$}\psfrag{c}{\footnotesize $W^c(p)$}\psfrag{b}{\footnotesize ${\rm Bl}^{cu}(p)$}
 \includegraphics[width=6cm]{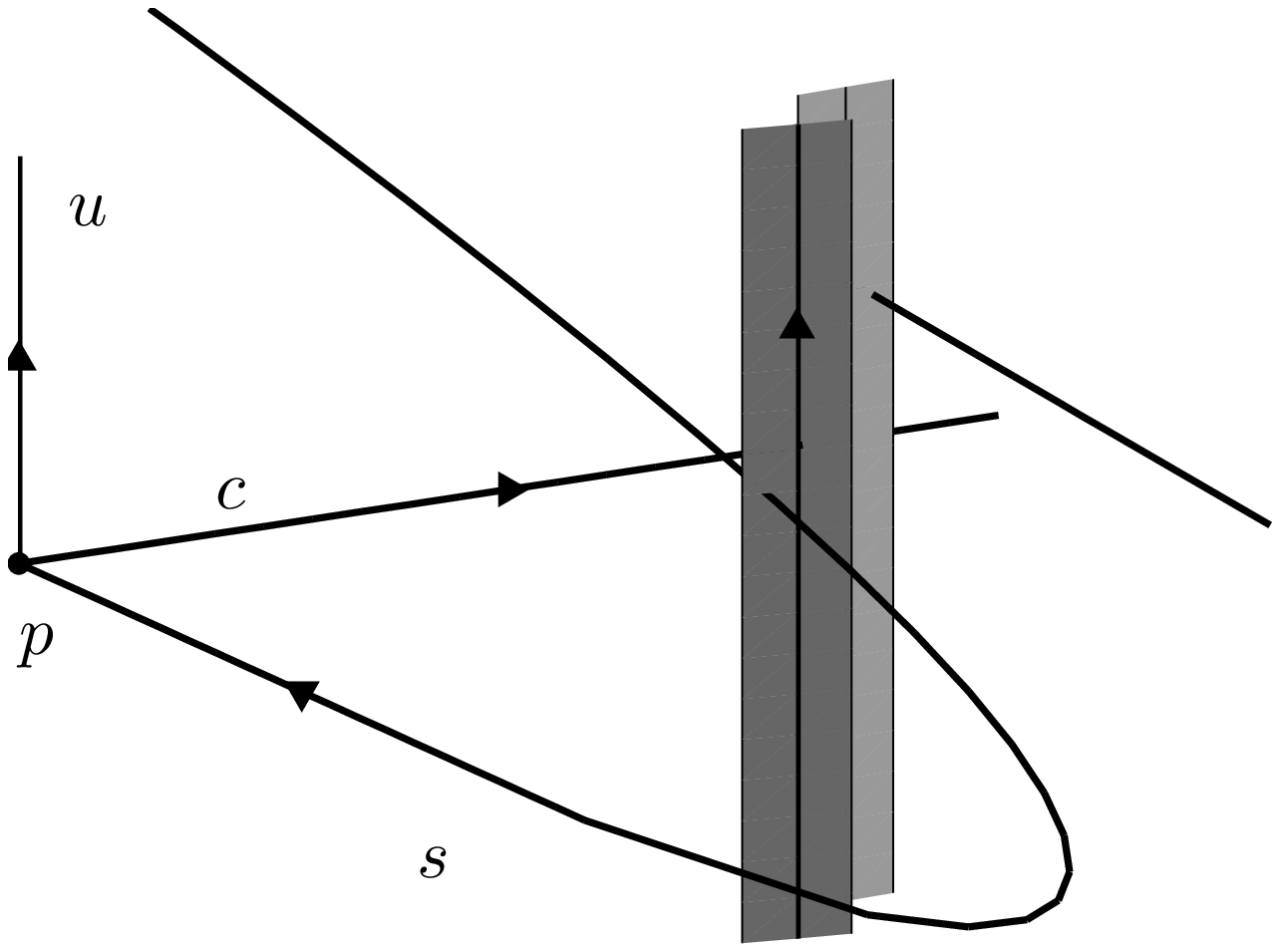}
 \end{minipage}
 \begin{minipage}{6.5cm}
 \psfrag{s}{$W^s(p)$}\psfrag{p}{$p$}\psfrag{u}{$W^u(p)$}\psfrag{c}{$W^c(p)$}\psfrag{b}{${\rm Bl}^{cu}(p)$}
 \includegraphics[width=6cm]{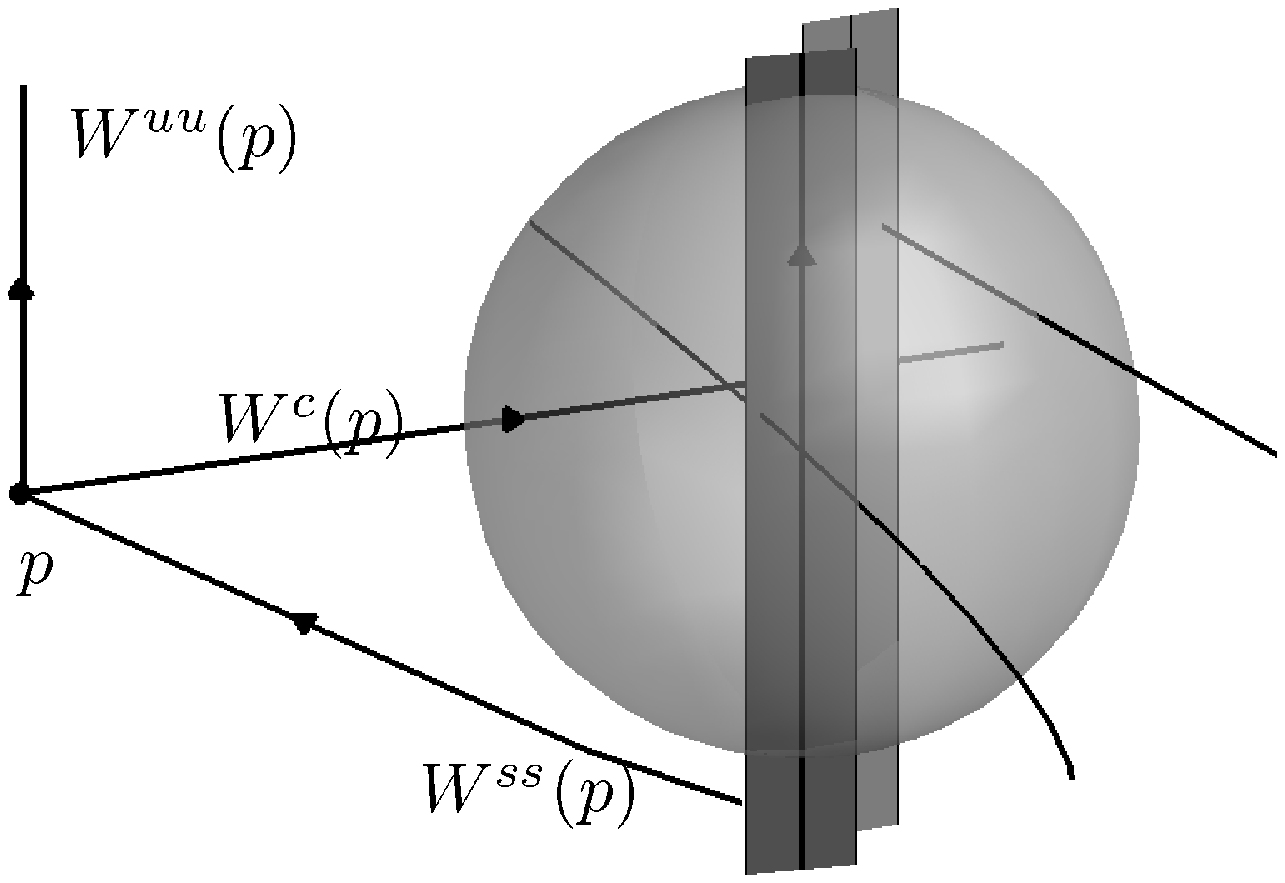}
 \end{minipage}
 \caption{\label{figure.blender} $cu$-blender associated to $p$}
 \end{figure}
This is the definition of blender we shall be using in this work, and, in particular
what we obtain in Theorem \ref{main.theorem}.
We warn the reader that there are other definitions of blenders.
In \cite{bonattidiazvianalibro}, Chapter 6.2. there is a complete presentation
on the different ways of defining these objects. Our definition corresponds to Definition 6.11 of \cite{bonattidiazvianalibro} (the operational viewpoint). In some works of C. Bonatti and L. D\'{\i}az, see for instance \cite{bonattidiaz1996} and \cite{bonatti-diaz2006}, what we call ${\rm Bl}^{cu}(p)$ is known as the {\em characteristic region of the blender}; and what they call {\em blender} is in fact a hyperbolic set which is the maximal invariant set of a small neighborhood of $q$ (Definition 6.9. of \cite{bonattidiazvianalibro}). However, let us remark that, under the hypothesis of Theorem \ref{main.theorem}, we obtain a $C^1$-perturbation preserving $m$ and admitting blenders also in the sense of Definition 6.9 of \cite{bonattidiazvianalibro}, and of \cite{bonattidiazviana1995}. The existence of blenders in the sense of \cite{bonattidiazviana1995} implies the existence of blenders in the sense of Definition \ref{definition.blender}.
\begin{definition}[$cu$-blender associated to $p'$]\label{definition.blender.homoclinic}
Let $p'$ be a partially hyperbolic periodic point for $f$ such that $Df$ is expanding on $E^c$, with $\dim E^c=1$. A small open set $B$ is called {\de $cu$-blender associated to $p'$} if $B={\rm Bl}^{cu}(p)$, where $p$ is a partially hyperbolic periodic point homoclinically related to $p$ and ${\rm Bl}^{cu}(p)$ is a $cu$-blender near $p$. We shall also denote ${\rm Bl}^{cu}(p')$ the $cu$-blender associated to $p'$.
\end{definition}
\begin{remark}\label{remark.blender.homoclinic}
Let us note that if ${\rm Bl}^{cu}(p')$ is a $cu$-blender associated to a hyperbolic periodic point $p'$ and ${\rm Bl}^{cu}(p)$ is a $cu$-blender near $p$, where $p$ is homoclinically related to $p'$, then it follows from the $\lambda$-lemma that $W^s(p')$ transversely intersects every $(u+1)$-strip well placed in ${\rm Bl}^{cu}(p)$, and that this property is robust. The $\lambda$-lemma also implies that a $cu$-blender associated to $p'$ is also a $cu$-blender associated to $p''$ if $p'$ and $p''$ are homoclinically related.  
\end{remark}
\begin{remark}
If $f$ is a partially hyperbolic diffeomorphism in $M$, then we can define $cu$-blender associated to a periodic point $p'$ directly as in Definition \ref{definition.blender}, without requiring that ${\rm Bl}^{cu}(p')$ be near $p'$. Indeed, this requirement is only needed to guarantee the existence of the $(u+1)$-strips well placed in ${\rm Bl}^{cu}(p')$. In the case that $f$ is partially hyperbolic, the $E^u$ and $E^c$ bundles are globally defined, and so Definition \ref{definition.blender} makes sense even without asking that ${\rm Bl}^{cu}(p')$ be near $p'$.
\end{remark}
Let us consider two hyperbolic periodic points $p$ of index $(u+1)$ and $q$ of index $u$, with periods $\pi(p)$ and $\pi(q)$ respectively. Let us denote by $\lambda^\sigma_i(p)$, $\lambda^\sigma_i(q)$ the eigenvalues of $Df^{\pi(p)}(p)$ and $Df^{\pi(q)}(q)$ respectively, with $\sigma =s,c,u$, ordered in such a way that:
\begin{equation}\label{equation.real.central.eigenvalues}
|\lambda^s_1(x)|\leq\dots\leq|\lambda^s_s(x)|\leq|\lambda^c(x)|\leq|\lambda^u_1(x)|\leq\dots\leq|\lambda^u_u(x)|
\end{equation}
where the $\lambda^s_i$'s are contracting and the $\lambda^u_i$'s are expanding. Moreover, $\lambda^c(x)$ is expanding for $p$ and contracting for $q$. Note that $\lambda^c(x)$ could be equal to $\lambda^s_s(x)$, and could be a complex eigenvalue.
\begin{definition}[Cycle with real central eigenvalues] A co-index one heterodimensional cycle associated to two hyperbolic periodic points $p$ and $q$ as described above has {\de real central eigenvalues} if $|\lambda^c(p)|<|\lambda^u_1(p)|$ and $|\lambda^s_s(q)|<|\lambda^c(q)|$.
\end{definition}
Note that when a cycle has real central eigenvalues, then $\lambda^c(p)$ and $\lambda^c(q)$ have multiplicity one.
In the remainder of this subsection, we shall work with heterodimensional cycles with real central eigenvalues. In this case, both orbits of $p$ and $q$ admit a partially hyperbolic splitting $TM=E^s\oplus E^c\oplus E^u$, with $\dim E^c=1$, $\dim E^s=s$ and $\dim E^u=u$. We shall denote $W^{ss}(p)$ the {\de strong stable manifold of $p$}, this means, the invariant manifold tangent to $E^s_p$ that has dimension $s$. Analogously we define $W^{uu}(p)$, the {\de strong unstable manifold of $p$}. Note that the unstable manifold of $p$ could be of dimension $(u+1)$, in which case it would contain $W^{uu}(p)$.
\begin{definition}[Strong homoclinic intersection] A partially hyperbolic periodic point $p$ has a {\de strong homoclinic intersection} if $$\{p\}\subsetneq W^{ss}(p)\cap W^{uu}(p)$$
\end{definition}
The first goal is to obtain a $C^1$-perturbation of $f$ admitting a co-index one cycle with some local coordinates such that the dynamics in a neighborhood of the points are affine:
\begin{definition}[Simple cycle]\label{definition.simple.cycle}
A co-index one cycle associated to the periodic points $p$ of index $(u+1)$ and $q$ of index $u$ is called a {\de simple cycle} if it has real central eigenvalues and:
\begin{enumerate}
\item $p$ and $q$ admit neighborhoods $U_p$ and $U_q$ on which the expressions of $f^{\pi(p)}$ and $f^{\pi(q)}$ are linear and partially hyperbolic, with central dimension 1. We denote the coordinates of a point $(x^s,y^c,z^u)_p$ or $(x^s,y^c,z^u)_q$ according to wether it belongs to $U_p$ or $U_q$.
\item There is a quasi-transverse heteroclinic point $(0,0,z_0)_q
\in W^s (p) \cap W^u(q)$ such that $f^l (0,0,z_0)_q=(x_0,0,0)_p$ for some $l>0$,
and a neighborhood $V\subset U_q$ of $(0,0,z_0)_q$ satisfying $f^l (V) \subset U_p$ for which
$$ \mathcal{T}_1 = f^l : V \rightarrow
f^{l} (V)$$ is an affine map preserving the partially hyperbolic splitting, which is contracting in the $s$-direction, expanding
in the $u$-direction, and an isometry in the central direction.

\item There is a point $(0,y_0^+,0)_p\in W^u (p) \pitchfork  W^s (q)$, with $y_0^+>0$ such that $f^r (0,y_0^+,0)_p=(0,y_0^-,0)_q$ with $y^-_0 < 0$ for some $r>0$, and a neighborhood $W\subset U_p$ of $(0,y_0^+,0)_p$, such that $f^r(W ) \subset U_p$ and
$$\mathcal{T}_2 = f^r : W \rightarrow
f^r (W )$$ is an affine map preserving the partially hyperbolic splitting, and is
contracting in the $s$-direction, expanding
in the $u$-direction, and an isometry in the central direction.
\item There is a segment $I =[y_0^+-\eps,y_0^++\eps]_p^c$ contained in $W^u (p) \pitchfork W^s (q)$, such that the form of $f^r(I)$ in $U_q$ is $J =[y_0^--\eps,y^-_0+\eps]^c_q$.
\end{enumerate}
We call the affine maps $\mathcal{T}_1$ and $\mathcal{T}_2$
the {\de transitions} of the simple heterodimensional cycle.
\end{definition}
\subsection{Preliminary results}
In this subsection we state the preliminary results that shall be used in this work. The following, Theorem 1.3 of \cite{boncrov2004}, allows us to approximate conservative diffeomorphisms by transitive diffeomorphisms preserving $m$:
\begin{theorem}[C. Bonatti, S. Crovisier \cite{boncrov2004}]\label{teo.bonatti.crovisier} There exists a residual set of the set of diffeomorphisms preserving $m$ such that all diffeomorphisms in this set is transitive. Moreover, $M$ is the unique homoclinic class.
\end{theorem}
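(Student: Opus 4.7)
The plan is the standard Baire-category strategy of $C^1$-generic dynamics, built on the conservative version of the Connecting Lemma for pseudo-orbits.

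First I would reduce transitivity to a countable intersection of $C^1$-open conditions. Fix a countable basis $\{U_n\}_{n\ge 1}$ for the topology of $M$ and set
\[
\mathcal{T}_{n,m} = \{\, f \in \operatorname{Diff}^1_m(M) : \exists\, k\ge 1,\ f^k(U_n)\cap U_m\neq \emptyset\,\}.
\]
Each $\mathcal{T}_{n,m}$ is manifestly $C^1$-open, and the intersection $\bigcap_{n,m}\mathcal{T}_{n,m}$ is exactly the set of topologically transitive $m$-preserving diffeomorphisms. So the task reduces to proving each $\mathcal{T}_{n,m}$ is $C^1$-dense. Given $f$ and basic open sets $U_n, U_m$, the full support of $m$ together with Poincaré recurrence furnishes recurrent points $x\in U_n$ and $y\in U_m$. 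Concatenating a forward orbit segment of $x$ that returns close to $x$, a one-step jump to $y$, and a segment of the orbit of $y$ produces an $\varepsilon$-pseudo-orbit from $U_n$ to $U_m$. The conservative Connecting Lemma for pseudo-orbits, proved by Arnaud and extended by Bonatti--Crovisier, then yields a $C^1$-small $m$-preserving perturbation $g$ of $f$ that realizes this pseudo-orbit as an actual orbit, so $g\in \mathcal{T}_{n,m}$.

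For the uniqueness of the homoclinic class, I would first intersect with the conservative Kupka--Smale residual $\mathcal{R}_{KS}$ on which every periodic orbit is hyperbolic, so that analytic continuations $p_f$ of hyperbolic periodic points are defined on $C^1$-open neighborhoods of each base diffeomorphism. Enumerating pairs $(p,n)$ formed by such continuations and basic open sets, define
\[
\mathcal{H}_{p,n} = \{\, f \in \mathcal{R}_{KS} : H(p_f,f)\cap U_n\neq \emptyset\,\}.
\]
These are open by continuous dependence of transverse intersections. For density, I would again build a pseudo-orbit from $W^u(p_f)$ through a recurrent point of $U_n$ back to $W^s(p_f)$, and invoke the conservative pseudo-orbit connecting lemma to create a genuine transverse homoclinic intersection of $p_f$ meeting $U_n$. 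Taking the countable intersection of all $\mathcal{H}_{p,n}$, together with density of hyperbolic periodic points supplied by the conservative Closing Lemma of Pugh--Robinson, gives a residual set on which every hyperbolic periodic orbit has homoclinic class equal to $M$; in particular any two hyperbolic periodic points are homoclinically related and their homoclinic classes coincide with $M$.

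The principal obstacle is the conservative Connecting Lemma for pseudo-orbits itself. Unlike the dissipative case, one has to simultaneously achieve volume preservation and closure of pseudo-orbit gaps by $C^1$-small perturbations supported in small tubes; this requires the volume-preserving adaptation of Hayashi's argument, which I would cite as a black box. A secondary but nontrivial difficulty is the bookkeeping required to index the countable family $\{\mathcal{H}_{p,n}\}$ compatibly with the continuous but only locally defined analytic continuations $p_f$, and to verify that openness of $\mathcal{H}_{p,n}$ survives when several continuations are tracked simultaneously.
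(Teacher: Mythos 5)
This statement is not proved in the paper at all: it is quoted as Theorem~1.3 of Bonatti--Crovisier \cite{boncrov2004} and used as a black box, so there is no internal argument to compare yours against. Your outline does follow the actual strategy of that reference (Baire category over the open sets $\mathcal{T}_{n,m}$, plus the conservative connecting lemma for pseudo-orbits), but as written it contains a genuine gap.

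The gap is in the construction of the pseudo-orbit. An $\varepsilon$-pseudo-orbit must have \emph{every} jump of size at most $\varepsilon$, whereas your concatenation ``orbit segment of $x$, one-step jump to $y$, orbit segment of $y$'' contains a jump of size comparable to $d(x,y)$, which is not small. Poincar\'e recurrence at the two points $x\in U_n$ and $y\in U_m$ does not by itself connect $U_n$ to $U_m$ by $\varepsilon$-pseudo-orbits for arbitrarily small $\varepsilon$. What is actually needed is that the whole (connected) manifold is a single chain class for every $m$-preserving diffeomorphism; this is proved by an open--closed argument (for fixed $\varepsilon$, the set of points joined to $x$ by $\varepsilon$-pseudo-orbits in both directions is open, and full support of $m$ together with Poincar\'e recurrence shows its boundary is empty, so by connectedness it is all of $M$), not by recurrence at two individual points. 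The same issue recurs in your treatment of $\mathcal{H}_{p,n}$: to run a pseudo-orbit from $W^u(p_f)$ through $U_n$ and back to $W^s(p_f)$ you again need chain transitivity of $M$, not merely a recurrent point in $U_n$. A secondary caveat: the connecting lemma for pseudo-orbits does not hold for every $f$ but only under a generic non-resonance condition on periodic orbits, so the density step must start from a diffeomorphism already placed in that residual set; since the conclusion is itself residual this is harmless, but it should be stated. With the chain-transitivity lemma inserted, the rest of your outline --- openness of $\mathcal{T}_{n,m}$ and $\mathcal{H}_{p,n}$, density of hyperbolic periodic orbits via the conservative closing lemma, and the conclusion that every homoclinic class is closed and dense, hence equal to $M$ --- is sound.
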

We shall use this theorem in combination with the Connecting Lemma below:
\begin{theorem}[Connecting Lemma \cite{arnaud2001}, \cite{boncrov2004}]\label{connecting.lemma}
Let $p, q$ be hyperbolic periodic points of a $C^r$
transitive diffeomorphism $f$ preserving a smooth measure $m$. Then, there exists a
$C^1$-perturbation $g\in C^r$
preserving $m$ such that
$W^s(p)\cap W^u(q)\neq \emptyset$.
\end{theorem}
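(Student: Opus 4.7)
The plan is to combine three ingredients: the transitivity of $f$ (so that orbit pieces from $W^u(q)$ reach arbitrarily close to $W^s(p)$), the $C^1$ conservative Connecting Lemma of Arnaud and Bonatti--Crovisier (to close up such a near-orbit into an actual intersection by a volume-preserving $C^1$ perturbation), and Avila's smoothing theorem (to regain $C^r$ regularity while preserving the intersection). The hyperbolicity of $p$ and $q$ guarantees that, as long as the perturbations are $C^1$-small, the analytic continuations of the periodic points persist and move continuously, so it is harmless to still call them $p$ and $q$ for the perturbed maps.

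First, fix small disks $D^u\subset W^u_{loc}(q)$ and $D^s\subset W^s_{loc}(p)$. Since $f$ is transitive and $m$ is a smooth invariant measure, the set of points with dense forward and backward orbit has full $m$-measure. Choose such a point $z$; its orbit passes arbitrarily close to $D^u$ at some (large negative) time and arbitrarily close to $D^s$ at some (large positive) time. By the Pugh--Robinson--Hayashi style conservative Connecting Lemma as formulated by Arnaud \cite{arnaud2001} and revisited by Bonatti--Crovisier \cite{boncrov2004}, given any two such points and a sufficiently long pseudo-orbit joining them, one can perform a $C^1$-small perturbation $g_1$ of $f$, supported in a small tube around the pseudo-orbit and preserving $m$, so that the orbit of a point in $D^u$ under $g_1$ lands exactly in $D^s$. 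Thus $W^u(q_{g_1})\cap W^s(p_{g_1})\neq\emptyset$ for a $g_1\in\mathrm{Diff}^1_m(M)$ that is $C^1$-close to $f$.

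The remaining issue is that $g_1$ is only $C^1$, whereas the conclusion demands $g\in C^r$. Here we use the theorem of Avila: the space $\mathrm{Diff}^r_m(M)$ is $C^1$-dense in $\mathrm{Diff}^1_m(M)$. So one can pick a sequence $g_n\in\mathrm{Diff}^r_m(M)$ with $g_n\to g_1$ in the $C^1$ topology. To finish we need to know that the intersection $W^u(q_{g_1})\cap W^s(p_{g_1})\neq\emptyset$ survives for $g_n$ with $n$ large. This is the delicate point and is handled as follows: arrange the connecting-lemma construction so that the intersection it produces is not a single accidental point but contains a small disk of $W^u(q_{g_1})$ traversing a topologically transverse crossing with $W^s(p_{g_1})$ (the Arnaud/Bonatti--Crovisier perturbations can be engineered to do this, since they move only one strand and can be chosen to push a whole piece of unstable disk across the local stable manifold). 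Such a topologically transverse crossing is stable under $C^1$-small perturbations by continuity of local stable and unstable manifolds of hyperbolic points, so $W^u(q_{g_n})\cap W^s(p_{g_n})\neq\emptyset$ for $n$ large, and $g=g_n$ is the desired $C^r$ conservative perturbation of $f$.

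The main obstacle is precisely this last step: the raw conclusion of the conservative Connecting Lemma only guarantees a non-empty intersection, which is not $C^1$-robust by itself, so the whole construction must be arranged with enough extra room (a non-degenerate disk of intersections) to pass through Avila's $C^1$ approximation. Everything else is a routine combination of the cited recurrence, connecting, and smoothing results.
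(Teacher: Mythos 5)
This theorem is not proved in the paper; it is quoted from \cite{arnaud2001} and \cite{boncrov2004}, so there is no internal proof to compare against. Judged on its own merits, your first two steps (transitivity plus the conservative $C^1$ connecting lemma to produce the intersection for a $C^1$ volume-preserving perturbation $g_1$) are fine, and you correctly identify the passage from $C^1$ to $C^r$ as the delicate point. But your resolution of that point has a genuine gap.

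You propose to arrange the connecting-lemma construction so that $W^u(q_{g_1})\cap W^s(p_{g_1})$ contains a \emph{topologically transverse} crossing, robust under $C^1$ perturbation, and then let $g_n\to g_1$ via Avila. This cannot work in general: when $\dim W^s(p)+\dim W^u(q)<\dim M$ --- which is exactly the quasi-transverse, co-index situation this paper needs, where $p$ has index $u+1$ and $q$ has index $u$ --- no intersection of these two manifolds can be robust, topologically transverse or otherwise; a generic $C^1$-small perturbation removes it entirely. So the intersection produced by the connecting lemma may simply disappear when you replace $g_1$ by a nearby smooth $g_n$. There are two correct ways out. The cleaner one is to note that the connecting-lemma perturbation is itself realized by composing $f$ with finitely many $C^\infty$ conservative maps supported in small boxes, so $g$ automatically inherits the $C^r$ regularity of $f$ and no smoothing step is needed at all (this is presumably why the statement asserts $g\in C^r$ directly). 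Alternatively, if one does pass through a merely-$C^1$ intermediate map, the order of operations must be reversed, as the paper does in its proof of Lemma \ref{lemma.heterodimensional.cycle}: apply Avila \emph{first}, so that for the smooth map $W^s(p)$ and $W^u(q)$ are merely close near the former intersection point, and then perform one final local $C^\infty$ conservative perturbation (e.g.\ the time-one map of a compactly supported divergence-free vector field) to recreate the intersection for a map that is already $C^r$. Either repair replaces the unjustified robustness claim; keeping only the non-empty intersection from the $C^1$ stage and hoping it survives a $C^1$ limit does not.
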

The recent remarkable result by A. \'Avila allows us to approximate $C^1$ conservative diffeomorphisms by $C^\infty$ conservative diffeomorphisms:
\begin{theorem} [A. \'Avila \cite{avila2008}]\label{teo.avila} $C^\infty$ diffeomorphisms are dense in the set of $C^1$ diffeomorphisms preserving $m$.
\end{theorem}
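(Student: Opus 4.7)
The plan is to split the construction into a smoothing step and a volume-correcting step. Given $f\in\mathrm{Diff}^1(M)$ preserving $m$ and $\varepsilon>0$, first produce a $C^\infty$ diffeomorphism $\tilde f$ with $\|\tilde f-f\|_{C^1}<\varepsilon/2$ by standard convolution mollification in a finite $C^\infty$ atlas, patched by a partition of unity. In general $\tilde f$ will fail to preserve $m$, but since $f$ does and $\tilde f$ is close to $f$ in $C^1$, the density $\rho\in C^\infty(M)$ of $\mu:=\tilde f^*m$ with respect to $m$ is positive with $\|\rho-1\|_{C^0}$ small; moreover, because $\tilde f$ is a diffeomorphism of $M$, one has $\int_M\mu=\int_M m$.

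The second step is a Moser-type correction. Along the path $\mu_t=(1-t)m+t\mu$, find a smooth time-dependent vector field $X_t$ via $\iota_{X_t}\mu_t=\alpha$, where $d\alpha=m-\mu$; such an $(n-1)$-form $\alpha$ exists on the closed manifold $M$ because $m-\mu$ is an exact top form, and can be chosen canonically via the Hodge decomposition. The time-$1$ map $h$ of $X_t$ is then a $C^\infty$ diffeomorphism satisfying $h^*\mu=m$. Setting $g:=\tilde f\circ h$, one gets $g^*m=h^*\tilde f^*m=h^*\mu=m$, so $g$ is smooth and volume-preserving, and the triangle inequality gives $\|g-f\|_{C^1}\leq\|\tilde f-f\|_{C^1}+\|\tilde f\|_{C^1}\|h-\mathrm{id}\|_{C^1}$.

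The main obstacle is obtaining the quantitative bound $\|h-\mathrm{id}\|_{C^1}<\varepsilon/2$. Elliptic regularity for the Hodge inverse yields estimates of the form $\|\alpha\|_{C^{1,\alpha}}\leq C\|m-\mu\|_{C^{0,\alpha}}$, which propagate to $\|h-\mathrm{id}\|_{C^1}\leq C\|\rho-1\|_{C^{0,\alpha}}$ for some H\"older exponent $\alpha\in(0,1)$. The subtle point is that when $f$ is only $C^1$, standard mollification delivers $\|\rho-1\|_{C^0}$ small but leaves $\|\rho-1\|_{C^{0,\alpha}}$ a priori uncontrolled, since the higher derivatives of $\tilde f$ blow up as the mollification scale shrinks. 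Closing this regularity gap is precisely the content of \'Avila's theorem: one must calibrate the mollification scale and the Moser correction jointly so that H\"older control of the density defect survives the whole construction. Once this is done, composing the two steps yields the required smooth volume-preserving approximation $g$ of $f$ in the $C^1$ topology.
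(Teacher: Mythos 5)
First, note that the paper does not prove this statement: it is quoted as an external result of A.~\'Avila \cite{avila2008} and used as a black box, so there is no internal proof to compare your attempt against. What you have written is an attempted reconstruction of \'Avila's theorem itself.

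Your first two steps (mollify in charts to get a smooth $\tilde f$ with $\|\tilde f-f\|_{C^1}$ small, then correct the volume defect by a Moser--Dacorogna flow solving $\iota_{X_t}\mu_t=\alpha$ with $d\alpha=m-\mu$) constitute the classical reduction that was already well known long before \'Avila's work, and your diagnosis of where it breaks is exactly right: the elliptic inversion needs control of $\|\rho-1\|$ in a norm stronger than $C^0$ to yield $\|h-\mathrm{id}\|_{C^1}$ small, whereas mollifying a map that is merely $C^1$ makes the Jacobian defect small only in $C^0$, with its $C^{0,\alpha}$ norm a priori blowing up as the mollification scale shrinks. The genuine gap is that your final paragraph does not close this: ``calibrate the mollification scale and the Moser correction jointly'' is a restatement of the difficulty, not an argument, and there is no choice of scale for which the naive scheme works --- this loss of one derivative is precisely why the statement (raised by Zehnder in 1977) remained open for three decades. \'Avila's actual proof does not rescue the Hodge/elliptic route; it replaces it. Roughly, he localizes to cubes and corrects the Jacobian of the smooth approximation by an explicit, essentially coordinate-by-coordinate construction (composing with smooth maps that redistribute volume along one-dimensional fibers), which produces a $C^1$-small volume-correcting diffeomorphism using only $C^0$ smallness of the Jacobian defect. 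So your proposal correctly locates the obstruction but supplies no mechanism to overcome it; as written it establishes nothing beyond the known reduction to the hard step.
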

This following conservative version of Franks' Lemma is Proposition 7.4 of \cite{bdp2003}:
\begin{proposition}[Conservative version of Franks' Lemma \cite{bdp2003}]\label{franks.lemma} Let $f$ be a $C^r$ diffeomorphism preserving a smooth measure $m$, $S$ be a
finite set. Assume that $B$ is a conservative
$\eps$-perturbation of $Df$ along $S$. Then for every neighborhood $V$
of $S$ there is a $C^1$-perturbation $h\in C^r$ preserving $m$, coinciding with $f$ on $S$ and out of $V$,
such that $Dh$ is equal to $B$ on $S$.
\end{proposition}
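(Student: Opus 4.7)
My plan is to reduce to a single point $p\in S$ (the points of $S$ may be handled one at a time in disjoint sub-neighborhoods of $V$), then to work in local charts at $p$ and $f(p)$ provided by Moser's theorem in which the measure $m$ becomes standard Lebesgue $dx$. In such coordinates $f$ is locally volume-preserving, $Df(p)$ has determinant $\pm 1$, and the hypothesis on $B$ gives $\det B(p)=\det Df(p)$ with $\|B(p)-Df(p)\|<\eps$. Shrinking $V$ if necessary, choose concentric balls $B_\delta(p)\subset B_{2\delta}(p)\subset V$, and pick a smooth bump $\rho$ equal to $1$ on $B_\delta(p)$ and $0$ outside $B_{2\delta}(p)$ with $\|D\rho\|\lesssim 1/\delta$.

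Next I would define the candidate perturbation
\[
h_0(x)=f(x)+\rho(x)\bigl[f(p)+B(p)(x-p)-f(x)\bigr],
\]
so that $h_0$ equals the affine map $x\mapsto f(p)+B(p)(x-p)$ on $B_\delta(p)$, equals $f$ off $B_{2\delta}(p)$, and satisfies $h_0(p)=f(p)$, $Dh_0(p)=B(p)$. A direct computation using $f(x)-f(p)=Df(p)(x-p)+O(|x-p|^2)$ shows that the perturbation term is $O(\eps+\delta)$ in $C^1$, so $h_0$ is a genuine small $C^1$-perturbation of $f$. However $h_0$ need not preserve $m$: its Jacobian $J=\det Dh_0$ equals $1$ on $B_\delta(p)$ and outside $B_{2\delta}(p)$, and differs from $1$ by $O(\eps+\delta)$ inside the annulus $A=B_{2\delta}(p)\setminus \overline{B_\delta(p)}$. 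The total-mass condition is automatically satisfied on $A$: since $h_0=f$ on $\partial B_{2\delta}(p)$ and $h_0$ is affine volume-preserving on $B_\delta(p)$, the image $h_0(B_{2\delta}(p))$ has the same Lebesgue volume as $B_{2\delta}(p)$, forcing $\int_A(J-1)\,dx=0$.

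To make the construction conservative I would correct $h_0$ by Moser's theorem in its Dacorogna--Moser quantitative $C^r$ form applied on the closed annulus $\overline A$: since $J$ is $C^{r-1}$-close to $1$ on $A$, is equal to $1$ near $\partial A$, and the integrals match, there exists a $C^r$ diffeomorphism $\psi$ of $A$, equal to the identity near $\partial A$, with $\det D\psi=1/(J\circ\psi)$, and with $\|\psi-\mathrm{Id}\|_{C^1}$ as small as desired when $J$ is close to $1$. Extending $\psi$ by the identity outside $A$, set $h=h_0\circ\psi$. Then $\det Dh=1$ everywhere, $h=f$ outside $V$, and, because $\psi=\mathrm{Id}$ near $p$, one has $h(p)=f(p)$ and $Dh(p)=Dh_0(p)=B(p)$, giving exactly the desired diffeomorphism after pushing back through the Moser charts.

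The main obstacle is the quantitative $C^1$-control of the Moser corrector $\psi$: naively Moser's theorem only gives a diffeomorphism, with no promise of $C^1$-smallness. The fix is to invoke the Dacorogna--Moser construction (or equivalently to integrate an explicit divergence-free vector field solving $\mathrm{div}\,X=1-J$ with zero boundary values), which yields $\|\psi-\mathrm{Id}\|_{C^1}=O(\|J-1\|_{C^0})=O(\eps+\delta)$. Taking $\delta$ small relative to $\eps$ therefore keeps the whole perturbation $\eps$-small in $C^1$, and the construction remains in $C^r$ throughout.
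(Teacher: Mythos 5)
The paper does not actually prove this proposition: it is quoted verbatim as Proposition 7.4 of \cite{bdp2003} and used as a black box, so there is no internal proof to compare against. Judged on its own, your argument is essentially the standard proof of the conservative Franks' Lemma and is correct in outline: localize at each point of the finite set $S$ in Moser charts, interpolate between $f$ and the affine map $x\mapsto f(p)+B(p)(x-p)$ with a bump function at scale $\delta$ (the classical Franks computation gives a $C^1$-error $O(\eps+\omega(\delta))$, where $\omega$ is the modulus of continuity of $Df$ --- your $O(\eps+\delta)$ tacitly assumes $r\geq 2$), observe that the Jacobian defect is supported in the annulus and has zero mean there, and remove it by composing with a volume-correcting diffeomorphism supported in the annulus. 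Your identification of the quantitative Moser step as the crux is exactly right; this is precisely why the naive convex interpolation of derivatives, which suffices in the dissipative case, does not immediately work conservatively.

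Two technical points deserve more care than you give them. First, the Dacorogna--Moser estimate controls $\|\psi-\mathrm{Id}\|_{C^{1,\alpha}}$ by $\|J-1\|_{C^{0,\alpha}}$, not by $\|J-1\|_{C^{0}}$; since $\|J-1\|_{C^1(A)}$ is only $O(\eps/\delta+1)$ (it involves $D^2\rho$), you need an interpolation argument (or a choice such as $\delta\sim\eps$ together with $\|J-1\|_{C^{0,\alpha}}\lesssim\|J-1\|_{C^0}^{1-\alpha}\|J-1\|_{C^1}^{\alpha}=O(\eps^{1-\alpha})$) to conclude that the corrector is $C^1$-small. Second, Dacorogna--Moser gives $\psi=\mathrm{Id}$ \emph{on} $\partial A$, whereas gluing requires $\psi=\mathrm{Id}$ on a \emph{neighborhood} of $\partial A$; since $J\equiv 1$ near $\partial A$ this is arranged by solving on a slightly smaller annulus, or by producing $\psi$ as the time-one map of a compactly supported vector field solving $\operatorname{div}X=1-J$. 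Neither issue is fatal, but as written the last step overstates what the cited theorem delivers.
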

The fundamental tools in order to adapt the construction of C. Bonatti and L. D\'{\i}az \cite{bonatti-diaz2006} to the conservative case are the Pasting Lemmas of A. Arbieto and C. Matheus \cite{arbieto-matheus2006}.
We shall need two such lemmas, one for vector fields and the other for diffeomorphisms. The following is Theorem 3.1. of \cite{arbieto-matheus2006}, and states that we can ``paste" two sufficiently $C^1$-close $C^r$ vector fields, so that one gets the value of the first one on one set and the value of the second one on a disjoint set:

\begin{theorem}[The $C^{1+\alpha}$-Pasting Lemma for Vector Fields]\label{pasting.lemma.para.campos} Given $r>1$ and $ \eps > $ there exists $\delta > 0$ such that if $X,Y \in  \mathcal{X}_m^r (M)$ are two $C^r$ vector fields preserving a smooth measure $m$ that are $\delta$ $C^1$-close on a neighborhood $U$ of a compact set $K$ , then there exist an $m$-preserving vector field $Z \in \cX^r_m (M )$ $\eps$ $C^1$-close to $X$ and two neighborhoods $V$ and $W$ of $K$ such that $K \subset V \subset
U \subset W$ satisfying $Z|_{M\setminus W}=X$ and $X|_V=Y$.
\end{theorem}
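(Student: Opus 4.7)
My plan is to paste the two vector fields naively with a bump function, observe that the resulting field has a small divergence error supported in the collar where the bump function's gradient sits, and then kill that error by an explicit volume-preserving correction produced by solving a divergence equation with H\"older estimates.

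Write $\xi := Y - X$, so that $\|\xi\|_{C^1(U)} < \delta$ by hypothesis. Fix nested open neighborhoods $K \subset V \subset \overline V \subset W \subset \overline W \subset U$, with $W$ having smooth boundary, together with a bump function $\phi \in C^\infty(M,[0,1])$ satisfying $\phi \equiv 1$ on $V$ and $\operatorname{supp}\phi \subset W$, and consider the naive paste
\[
\tilde Z \;:=\; X + \phi\,\xi \;\in\; \cX^r(M),
\]
which coincides with $X$ off $W$ and with $Y$ on $V$, and satisfies $\|\tilde Z - X\|_{C^1(M)} \leq C_\phi\,\delta$ for a constant $C_\phi$ depending only on $\|\phi\|_{C^2}$. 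However $\tilde Z$ need not preserve $m$: letting $\omega$ be the volume form associated to $m$ and using that $X$ and $\xi$ have vanishing $m$-divergence, the Leibniz identity $\mathcal{L}_{\phi\xi}\omega = d\phi \wedge \iota_\xi\omega + \phi\,\mathcal{L}_\xi\omega$ yields
\[
g \;:=\; \operatorname{div}_m \tilde Z \;=\; d\phi(\xi),
\]
a function supported in $\overline W \setminus V$. Applying Stokes to $d(\phi\, \iota_\xi\omega)$ and using $d\iota_\xi\omega = 0$ shows $\int_M g\,dm = 0$.

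I then correct $\tilde Z$ to be $m$-preserving. The goal is to find a vector field $\zeta$ with $\operatorname{supp}\zeta \subset W$ solving
\[
\operatorname{div}_m \zeta \;=\; -g
\]
with the a priori estimate $\|\zeta\|_{C^{1,\alpha}} \leq C\|g\|_{C^{0,\alpha}}$ for some fixed $\alpha \in (0, \min(r-1, 1))$. Existence of such a $\zeta$ on the bounded smooth domain $W$ is the classical solvability of the divergence equation with H\"older estimates (Bogovskii/Dacorogna--Moser). Setting $Z := \tilde Z + \zeta$ produces an $m$-preserving $C^r$ vector field which agrees with $X$ on $M \setminus W$ and with $Y$ on $V$, as required.

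The delicate step, and the one I expect to be the main obstacle, is the $C^1$-closeness $\|Z - X\|_{C^1} \leq \eps$: the divergence-equation estimate costs one derivative of H\"older regularity, which is exactly the margin the hypothesis $r > 1$ buys. Since $U$ is precompact and $\xi$ is $C^1$-small on $U$, an elementary estimate yields $\|\xi\|_{C^{0,\alpha}(U)} \leq C\|\xi\|_{C^1(U)} \leq C\delta$, and by the H\"older product rule $\|g\|_{C^{0,\alpha}} \leq C'\|\phi\|_{C^{1,\alpha}}\|\xi\|_{C^{0,\alpha}} \leq C''\delta$. Consequently $\|\zeta\|_{C^1} \leq \|\zeta\|_{C^{1,\alpha}} \leq C'''\delta$, and combining with $\|\tilde Z - X\|_{C^1} \leq C_\phi\,\delta$ gives $\|Z - X\|_{C^1(M)} \leq \eps$ once $\delta$ is taken small enough as a function of $\eps$, of $\phi$, and of the ambient bounds on $X$ and $Y$. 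The $C^{1+\alpha}$ in the name of the lemma refers precisely to this H\"older correction.
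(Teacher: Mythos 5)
This statement is not proved in the paper at all: it is imported verbatim as Theorem~3.1 of Arbieto--Matheus \cite{arbieto-matheus2006}, so there is no internal proof to compare against. Measured against the proof in that reference, your architecture is the right one and essentially the standard one: paste naively with a bump function, observe that the divergence defect $g=d\phi(\xi)$ is supported in the collar, has zero mean, and is $C^{0,\alpha}$-small because $\xi$ is $C^1$-small, and then remove it by a correction gaining one derivative in H\"older norm (this gain is exactly what the name ``$C^{1+\alpha}$'' and the hypothesis $r>1$ encode). Your computation of $\operatorname{div}_m\tilde Z$ and of $\int_M g\,dm=0$ is correct.

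There are two genuine gaps in the correction step. First, the claim that one can solve $\operatorname{div}_m\zeta=-g$ with $\operatorname{supp}\zeta\subset W$ \emph{and} $\|\zeta\|_{C^{1,\alpha}}\leq C\|g\|_{C^{0,\alpha}}$ is not off-the-shelf: Dacorogna--Moser produces $\zeta$ vanishing only \emph{on} $\partial W$, so its extension by zero is merely Lipschitz across $\partial W$ and the glued field is not even $C^1$ there, let alone $C^r$; a genuinely compactly supported right inverse of the divergence with H\"older bounds (Bogovski\u{\i} with support control, or a decomposition of $g$ into zero-mean pieces along a chain of balls) exists but must be invoked precisely, not waved at as ``classical.'' This support issue is also why the lemma is stated with $K\subset V\subset U\subset W$ --- the correction is allowed to spill outside $U$ --- whereas you put $W\subset U$; your version is stronger if the support really can be confined, but the burden of proof is exactly there. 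Second, the statement requires $Z\in\cX^r_m(M)$, while your $\zeta$ is only produced in $C^{1,\alpha}$; for $r>1+\alpha$ this does not give a $C^r$ field. You need to solve the divergence equation at the top regularity ($g\in C^{r-1}$ since $\xi\in C^r$ and $\phi\in C^\infty$, yielding $\zeta\in C^r$ away from integer exponents) while quoting the \emph{smallness} estimate only at the $C^{1,\alpha}$ level. Both points are repairable, but as written the proof does not close.
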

We shall also need the Pasting Lemma for diffeomorphisms, which states that we can produce a $C^r$ diffeomorphism by  ``pasting" a conservative diffeomorphism $f$ with its derivative on a neighborhood of a point:
\begin{theorem}[The Pasting Lemma for Diffeomorphisms] \label{pasting.lemma.para.difeos}If $f$ is a $
C^r$ diffeomorphism preserving a smooth measure $m$ and $x$ is a point in $M$, then
for any $ \eps > 0,$ there exists a $C^r$ diffeomorphism $g$ preserving $m$, $\eps$-$C^1$-close to $f$
and two neighborhoods $V$ and $U$ of $x$ such that $x\in V\subset U$ and $ g|_{M\setminus U} = f$ and $g|_{V} =
Df (x)$ (in local charts).
\end{theorem}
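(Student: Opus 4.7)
\emph{Proof plan.} The natural strategy is to reduce the diffeomorphism statement to the vector field version already granted as Theorem~\ref{pasting.lemma.para.campos}. Work in charts adapted to $m$ around $x$ and $f(x)$ in which, by Moser's theorem, the measure becomes Lebesgue; in these coordinates identify $x$ with $0$ and let $A=Df(x)\in\mathrm{SL}(n,\RR)$. Factor $f=A\circ h$, where $h:=A^{-1}\circ f$ fixes the origin, preserves Lebesgue, and satisfies $Dh(0)=I$. By the continuity of $Df$, on a sufficiently small ball around $0$ the map $h$ is $C^1$-close to the identity, with closeness tending to $0$ as the ball shrinks. In this way the problem is localized: it suffices to modify $h$ inside a small chart so that it becomes the identity near $0$ while remaining unchanged outside.

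Next, realize $h$ as the time-$1$ map of a (time-dependent) divergence-free vector field $Y_t$ on the chart, with $\|Y_t\|_{C^1}$ controlled by $\|h-\mathrm{id}\|_{C^1}$. A convenient construction uses the isotopy $h_s=(1-s)\,\mathrm{id}+sh$ together with a Dacorogna--Moser correction restoring incompressibility at each instant. Then apply the Pasting Lemma for vector fields, Theorem~\ref{pasting.lemma.para.campos}, to the pair $(Y_t,0)$ on a small compact neighbourhood $K$ of $0$: this yields a divergence-free field $Z_t$, $C^1$-close to $Y_t$, such that $Z_t\equiv 0$ on a small neighbourhood $V$ of $0$ and $Z_t\equiv Y_t$ off a slightly larger neighbourhood $U$. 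Its time-$1$ map $\tilde h$ is a $C^r$ volume-preserving diffeomorphism that equals the identity on $V$ and equals $h$ outside $U$.

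Finally define $g:=A\circ\tilde h$ on $U$ and $g:=f$ off $U$. On the annulus $U\setminus V$ the two definitions agree because $\tilde h=h$ there and $A\circ h=f$; on $V$ the map $g$ coincides in the chart with the linear map $A=Df(x)$; and off $U$ it coincides with $f$. The measure is preserved throughout since each building block is, and the $C^1$-closeness of $Z_t$ to $Y_t$, together with control of $\|Y_t\|_{C^1}$ by $\|h-\mathrm{id}\|_{C^1}$ and a sufficiently small initial ball, yields that $g$ is $\eps$-$C^1$-close to $f$.

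The principal difficulty is the middle step: writing $h$ as the time-$1$ map of a $C^1$-small divergence-free generator is delicate, and one must also verify that Theorem~\ref{pasting.lemma.para.campos} applies to smoothly time-dependent families (which it does, since its proof furnishes estimates uniform in $t$). Both points are standard but non-trivial, and are precisely the technical devices developed in \cite{arbieto-matheus2006}; the remainder of the argument is routine bookkeeping in charts.
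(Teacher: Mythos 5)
The paper does not actually prove this statement: it is quoted as a preliminary result from \cite{arbieto-matheus2006}, so there is no internal proof to compare against. Your reduction is nonetheless a genuinely different route from the one in that reference, which interpolates directly between $f$ and $Df(x)$ with a bump function in a Moser chart and then corrects the Jacobian; you instead factor $f=A\circ h$ with $h=A^{-1}\circ f$, suspend $h$ into a divergence-free time-dependent field, and invoke Theorem \ref{pasting.lemma.para.campos}. The bookkeeping at both ends (the factorization, the gluing on $U\setminus V$, the $C^1$ estimate through the fixed linear map $A$) is correct.

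The weight of the argument, however, sits entirely in the step you yourself flag as delicate, and as written it is a gap rather than a citation. Realizing $h$ as the time-one map of a divergence-free $Y_t$ with $\|Y_t\|_{C^1}$ controlled by $\|h-\mathrm{id}\|_{C^1}$ is not automatic: the linear isotopy $h_s=(1-s)\,\mathrm{id}+sh$ does not preserve volume, and the Dacorogna--Moser corrector solving the relevant divergence equation is $C^1$-small only when $\det Dh_s-1$ is small in some $C^\alpha$ norm, i.e.\ only when $f\in C^{1,\alpha}$ at least. This is exactly the obstruction that makes the $C^1$ case hard (and why Theorem \ref{pasting.lemma.para.campos} is stated only for $r>1$). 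So your argument establishes the statement for $r\ge 2$ (or $C^{1+\alpha}$), not for $r=1$ as literally stated; in the paper's applications this is harmless because $f$ is always upgraded to $C^\infty$ via Theorem \ref{teo.avila} before the pasting lemmas are used, but you should either restrict the range of $r$ or supply the H\"older estimate on the Jacobian explicitly. You should also justify applying Theorem \ref{pasting.lemma.para.campos}, stated for autonomous fields, to the family $Y_t$ --- for instance by performing the cut-off and volume correction at each fixed $t$ with estimates uniform in $t$.
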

\section{Proof of Theorem \ref{teo.real.central.eigenvalues}}\label{section.real.central.eigenvalues}
Let $f$ be a $C^r$ diffeomorphism preserving a smooth measure $m$, with two hyperbolic periodic points $p$ of index $(u+1)$ and $q$ of index $u$. Our first step is to show that $f$ can be $C^1$-approximated by a $C^r$ diffeomorphism preserving $m$ such that the analytic continuations of $p$ and $q$ form a co-index one heterodimensional cycle. Namely:
%
\subsection{Proof of Proposition \ref{proposition.heterodimensional.cycle}}\label{subsection.heterodimensional.cycle}
Proposition \ref{proposition.heterodimensional.cycle} follows in fact from the more general lemma:
\begin{lemma}\label{lemma.heterodimensional.cycle} Let $f$ be a $C^r$ diffeomorphism preserving a smooth measure $m$ such that $f$ has two hyperbolic periodic points $p$ and $q$, then $f$ is $C^1$-approximated by $C^r$ diffeomorphisms preserving $m$ such that the continuations of $p$ and $q$ either form a heterodimensional cycle in case they have different indices, or else they are homoclinically related.
\end{lemma}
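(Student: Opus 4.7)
The plan combines three ingredients: Bonatti--Crovisier's genericity result (Theorem \ref{teo.bonatti.crovisier}), the Connecting Lemma (Theorem \ref{connecting.lemma}), and \'Avila's smoothing theorem (Theorem \ref{teo.avila}), the last bridging the $C^1$ and $C^r$ settings while preserving the conservative class.

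The first step is to $C^1$-approximate $f$ by a $C^r$ conservative diffeomorphism $f_1$ that is transitive with $M$ as its unique homoclinic class. Theorem \ref{teo.bonatti.crovisier} furnishes a $C^1$-residual $\cR \subset \diff^1_m(M)$ with this property. Writing $\cR=\bigcap_n \cU_n$ with each $\cU_n$ open dense in $\diff^1_m(M)$, the $C^1$-density of $\diff^r_m(M)$ in $\diff^1_m(M)$ supplied by Theorem \ref{teo.avila} implies that $\cU_n\cap \diff^r_m(M)$ is open and dense in $\diff^r_m(M)$ with the induced $C^1$ topology. A Baire-category argument then yields $f_1 \in \cR\cap \diff^r_m(M)$ arbitrarily $C^1$-close to $f$, with persistent hyperbolic analytic continuations $p_1,q_1$ of $p,q$.

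Next, I iterate applications of the Connecting Lemma. In the different-index case (say $\mathrm{index}(p)=u+1$ and $\mathrm{index}(q)=u$), the intersection $W^u(p)\cap W^s(q)$ is transverse (dimensions sum to $n+1$) and hence $C^1$-robust, while $W^s(p)\cap W^u(q)$ is quasi-transverse (dimensions sum to $n-1$) and not robust. Applying Theorem \ref{connecting.lemma} to the pair $(q,p)$ in $f_1$ yields $f_2\in\diff^r_m(M)$ realizing $W^u(p)\cap W^s(q)\neq\emptyset$; a further arbitrarily small $C^1$-perturbation (via Kupka--Smale genericity, together with the conservative Franks' Lemma, Proposition \ref{franks.lemma}) makes this intersection transverse. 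Reapplying BC$+$\'Avila to $f_2$ (with a perturbation small enough that the transverse intersection survives) restores the transitivity hypothesis of Theorem \ref{connecting.lemma}; a second application then produces the quasi-transverse intersection $W^s(p)\cap W^u(q)\neq\emptyset$ in some $f_3\in \diff^r_m(M)$. Choosing this final perturbation small enough preserves the transverse intersection of the previous step, and $f_3$ thus realizes the desired heterodimensional cycle. In the same-index case, the analogous two-step procedure produces two transverse intersections, which by Smale's theorem gives a homoclinic relation between the continuations of $p$ and $q$.

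The main technical obstacle is the first step: the Connecting Lemma as stated in Theorem \ref{connecting.lemma} requires a $C^r$ transitive starting point, yet Bonatti--Crovisier only furnishes a $C^1$-residual set of such diffeomorphisms. Theorem \ref{teo.avila} is precisely what allows one to intersect these two dense families inside the $C^r$ category. The rest of the argument is then a careful orchestration of perturbations, relying on the robustness of transverse intersections to ensure that earlier arrangements survive later ones, and on the ability to reapply BC$+$\'Avila between successive applications of the Connecting Lemma.
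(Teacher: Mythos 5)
Your overall toolkit (Bonatti--Crovisier, Connecting Lemma, \'Avila) is the same as the paper's, but the order in which you deploy it creates a genuine gap at the very first step. You try to produce a \emph{$C^r$} conservative transitive diffeomorphism $f_1$ $C^1$-close to $f$ by writing the Bonatti--Crovisier residual set as $\cR=\bigcap_n\cU_n$, observing that each $\cU_n\cap\diff^r_m(M)$ is open and dense in $\diff^r_m(M)$ with the induced $C^1$ topology, and invoking Baire. But $\diff^r_m(M)$ equipped with the $C^1$ topology is not a Baire space: it is not completely metrizable in that topology (indeed, for $r\geq 2$ the $C^r$ diffeomorphisms form a meager subset of the $C^1$ ones), so a countable intersection of open dense subsets of it can perfectly well be empty. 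For the same reason you cannot pass to the $C^r$ topology, since $C^1$-density of $\cU_n\cap\diff^r_m(M)$ does not give $C^r$-density. A priori the residual set $\cR$ could be entirely disjoint from $\diff^r_m(M)$, so the object $f_1\in\cR\cap\diff^r_m(M)$ that your argument rests on is not known to exist. The same flaw recurs when you ``reapply BC$+$\'Avila'' to $f_2$.

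The paper's proof is structured precisely to sidestep this. It accepts that the transitive diffeomorphisms produced by Bonatti--Crovisier are only $C^1$, carries out \emph{both} applications of the Connecting Lemma entirely in the $C^1$ conservative category (producing $C^1$ maps $f_1,f_2,f_3$), and only \emph{then} applies \'Avila to smooth $f_3$ to a $C^\infty$ conservative $f_4$. The price of smoothing last is that the fragile quasi-transverse intersection $W^s(p)\cap W^u(q)$ may be destroyed by the \'Avila approximation (the transverse intersection $W^u(p)\pitchfork W^s(q)$ survives automatically by robustness); this is repaired by one final \emph{local, explicitly smooth} conservative perturbation $f_5$ that pushes $W^s(p_5)$ back onto $W^u(q_5)$ near the almost-intersection point. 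If you want to keep your ordering, you would need a different justification for the existence of $C^r$ conservative transitive diffeomorphisms $C^1$-close to $f$; otherwise, reorganize the argument as above, doing the connecting in $C^1$, smoothing at the end, and re-creating the non-robust intersection by hand.
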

Let us consider the case where $p$ has unstable index $u$ and $q$ has unstable $u'<u$. The case $u=u'$ is simpler, and follows analogously. We shall apply the Connecting Lemma (Theorem \ref{connecting.lemma}) to produce the heterodimensional cycle. However, this Lemma has transitivity as a hypothesis. So, let us consider a conservative diffeomorphism $f_1$ $C^1$-close to $f$ so that $f_1$ is transitive. This $f_1$ exists due to Theorem \ref{teo.bonatti.crovisier} by C. Bonatti and S. Crovisier. Note that $f_1$ is a priori only $C^1$ and has two hyperbolic periodic points $p_1$ and $q_1$ which are the analytic continuations of $p$ and $q$. Now, due to its transitivity, the Connecting Lemma applies, and we can find a $C^1$-close conservative diffeomorphism $f_2$ such that $W^u(p_2)$ intersects $W^s(q_2)$, where $p_2$ and $q_2$ are the analytic continuations of $p$ and $q$. We can even ask that this intersection be transverse (it will be $u-u'$-dimensional). Note that since it is transverse, this intersection persists under $C^1$-perturbations. We will also ask, by using Theorem \ref{teo.bonatti.crovisier} again, that $f_2$ be transitive. Then, we apply the Connecting Lemma again, and obtain a new $C^1$ conservative diffeomorphism $f_3$ $C^1$-close to $f_2$ so that $W^u(p_3)\pitchfork W^s(q_3)\ne \emptyset$ and $y\in W^s(p_3)\cap W^u(q_3)\ne \emptyset$, we can even ask that this last intersection be quasi-transverse, i.e. $T_y W^s(p_3)\cap T_y W^u(q_3)$ does not contain a non-trivial vector. But $f_3$ could be not $C^r$ a priori. Theorem \ref{teo.avila} of A. \'Avila yields a $C^\infty$ conservative diffeomorphism $f_4$ $C^1$-close to $f_3$. Since the stable and unstable manifolds of $p_4$ and $q_4$ vary continuously, we obtain that $W^u(p_4)\pitchfork W^s(q_4)\ne\emptyset$, and $W^s(p_4)$ is close to $W^u(q_4)$ near the point $y$. There is a $C^\infty$ conservative diffeomorphism $f_5$, $C^1$-close to $f_4$, so that $W^s(p_5)$ intersects $W^u(p_5)$. This last perturbation can be made in fact $C^\infty$. This gives the desired heterodimensional cycle. \fp
\subsection{Proof of Theorem \ref{teo.real.central.eigenvalues}}
Theorem \ref{teo.real.central.eigenvalues} follows from Lemma \ref{lemma.real.central.eigenvalues} below and Lemma \ref{lemma.heterodimensional.cycle}. The idea of Lemma \ref{lemma.real.central.eigenvalues} is as in Lemma 4.2. of
\cite{diaz.pujals.ures1999}. See also its generalization to dimension $n$ in Lemmas 1.9 and 4.16 of \cite{bdp2003}. The only difference is that we shall apply the Pasting Lemmas and the Conservative Franks' Lemma instead of the corresponding results.
\begin{lemma}\label{lemma.real.central.eigenvalues} Let $f$ be a conservative $C^r$ diffeomorphism with a hyperbolic periodic point $p$. Then $f$ is $C^1$-approximated by a $C^r$ conservative diffeomorphism with a hyperbolic periodic point $p'$ having the same index as $p$ such that the inequalities in Equation (\ref{equation.real.central.eigenvalues}) are strict.
\end{lemma}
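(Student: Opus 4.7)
The argument follows Lemma~4.2 of \cite{diaz.pujals.ures1999} (and its $n$-dimensional versions, Lemmas~1.9 and 4.16 of \cite{bdp2003}), with the sole modification that the classical Franks' Lemma is replaced by its conservative counterpart (Proposition~\ref{franks.lemma}), and the $C^r$-regularity of the resulting perturbation is supplied by the Pasting Lemma for diffeomorphisms (Theorem~\ref{pasting.lemma.para.difeos}). The overall strategy is to perturb the linear cocycle $Df$ along (an iterate of) the orbit of a hyperbolic periodic point so that the resulting monodromy has eigenvalues of pairwise distinct moduli.

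Write $M := Df^{\pi(p)}(p)$ and $S := \{p, f(p), \dots, f^{\pi(p)-1}(p)\}$; conservativity of $f$ gives $|\det M|=1$. In the easy case, where $M$ is already diagonalizable over $\RR$, one only needs to separate the moduli of its real eigenvalues: the set of matrices $A$ in the volume-preserving linear group $\operatorname{SL}(T_pM)$ for which $M\cdot A$ has pairwise distinct eigenvalue moduli is open and dense, since its complement is a proper real-algebraic subvariety. I would choose such $A$ arbitrarily close to the identity and apply Proposition~\ref{franks.lemma} to the finite set $S$ to obtain a $C^r$ conservative diffeomorphism $h$, $C^1$-close to $f$, with $h|_S = f|_S$ and $Dh|_S$ equal to the prescribed perturbed cocycle. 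Because $h$ agrees with $f$ on $S$, the orbit of $p$ survives intact, and continuity of eigenvalues under small $C^1$-perturbations preserves both hyperbolicity and the index, so $p' = p$ works.

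The main obstacle is the opposite case, in which $M$ has a pair of complex conjugate eigenvalues: these automatically have equal moduli and cannot be split by an arbitrarily small linear perturbation of a single matrix. Following the device of \cite{bdp2003}, I would abandon the orbit of $p$ itself and work instead with a nearby hyperbolic periodic point $p'$ of much larger period, homoclinically related to $p$, whose monodromy factors as $T\cdot M^N$, where $T$ is a transition arising from an excursion through a homoclinic point and $N$ counts the returns close to $p$. By equidistribution, one can pick $N$ so that the rotation angle $N\theta$ accumulated in the two-dimensional complex block of $M^N$ is as close as desired to an integer multiple of $\pi$, making $M^N$ act almost as a homothety on that block. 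A small conservative perturbation of the transition $T$, implemented through Proposition~\ref{franks.lemma} and smoothed to class $C^r$ by Theorem~\ref{pasting.lemma.para.difeos}, then unfolds the nearly-homothetic complex pair into two distinct real eigenvalues of the new monodromy $T\cdot M^N$; the volume-preservation constraint is codimension one and still leaves enough freedom to carry out this splitting, while the smallness of the perturbation preserves the hyperbolicity and the index of $p'$.
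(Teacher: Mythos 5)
Your proposal is correct and follows essentially the same route as the paper: the real-spectrum case is dispatched directly with the Conservative Franks' Lemma, and a complex conjugate pair is split by passing (via a Birkhoff--Smale point homoclinically related to $p$) to a periodic point of large period whose monodromy is a transition composed with a large power of $Df^{\pi(p)}(p)$, with the rotation in the complex block made (nearly) trivial before unfolding it into two real eigenvalues of distinct moduli; the paper achieves the trivial rotation by first making the angle rational while you use equidistribution, a cosmetic difference. One small caution: your claim that the set of $A$ with $M\cdot A$ having pairwise distinct eigenvalue moduli is open and dense in the volume-preserving linear group is false in general (a robust complex pair obstructs density), but under your standing assumption that $M$ is real-diagonalizable the existence of suitable $A$ arbitrarily close to the identity is all you need and does hold.
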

In particular, the eigenvalues of $Df^{\pi(p')}(p')$ are all real and of multiplicity one. \par
Let us assume $p$ is a hyperbolic fixed point for $f$ and let us denote the eigenvalues of $Df(p)$ as in Equation (\ref{equation.real.central.eigenvalues}). We may assume, by using the Conservative Franks' Lemma (Proposition \ref{franks.lemma}) that the multiplicity of all the eigenvalues, complex or real, is one; and that any pair of complex eigenvalues with the same modulus are conjugated, and have rational argument. We shall also assume that $|\lambda^s_s(p)|<|\lambda^c(p)|=|\lambda^u_1(p)|$, and that $\lambda^c(p)$ and $\lambda^u_1(p)$ are complex conjugated expanding eigenvalues. We shall prove that there is $p'$ such that $|\lambda^s_s(p')|<1<|\lambda^c(p')|<|\lambda^u_1(p)|$, whence $p'$ will have index $(u+1)$, just like $p$. In fact, this is all we need, since the same argument applies to show there is $q'$ of index $u$ with real central (contracting) eigenvalue of multiplicity one. Lemma \ref{lemma.real.central.eigenvalues} follows from an inductive argument, since the fact that the eigenvalue is the central one is not used in the argument. \par
Using the Connecting Lemma (Theorem \ref{connecting.lemma}) and genericity arguments we obtain a transverse homoclinic intersection $x\in W^s(p)\pitchfork W^u(p)\setminus\{p\}$. Using the Pasting Lemma for Diffeomorphisms (Theorem \ref{pasting.lemma.para.difeos}) we can linearize $f$ in a small neighborhood $V$ of $p$, so that $f$ remains the same outside a neighborhood $U\supset V$. By considering sufficiently large iterates, we may assume that $x\in W^s_{loc}(p)\cap V$, and $y=f^{-r}(x)\in W^u_{loc}(p)\cap V$ are such that the tangent spaces to $W^s(p)$ at $y$ and to $W^u(p)$ at $x$ are close enough to $E^s_p$ and $E^u_p$. The tangent space to $W^s(p)$ at $x$ and to $W^u(p)$ at $y$ are $E^s_p$ and $E^u_p$ due to the linearization.\par
Birkhoff-Smale provides a periodic point $p'\simeq x$ with period $\pi(p')=n'+r$ where $n'$ is arbitrarily large, such that
\begin{enumerate}
\item $q'=f^{n'}(p')\simeq y$
\item $f^i(p')\in V$ for all $i=0,\dots n'$
\item $p'$ is homoclinically related to $p$
\end{enumerate}
 Let $E^u_{p'}=E^u_p+  p'$ and $E^s_{q'}=E^s_p+q'$. Applying the Conservative Franks Lemma we obtain a perturbation such that
 $$Df^r(q')E^s_{q'}=E^s_{p'}\qquad \mbox{and}\qquad Df^r(q')E^u_{q'}=E^u_{p'}$$
A new perturbation allows us to ``fix" the eigenspace associated to $\lambda^c(p)$ and $\lambda^u_1(p)$,  and obtain normal bases on which the derivative $Df^r(q'):T_{q'}M\to T_{p'}M$ has the form:
$$A=\left(
      \begin{array}{ccc}
        A_s & 0 & 0 \\
        0 & A_c & 0 \\
        0 & 0 & A_u \\
      \end{array}
    \right)
$$
where $A_c$ is a $2\times 2$ matrix acting on a subspace $F^c_{q'}=F^c_p+q'$, where $F^c_p\subset E^u_p$ is the 2-dimensional eigenspace associated to the eigenvalues $\lambda^c(p)$ and $\lambda^u_1(p)$. \par
Applying the Conservative Franks' Lemma again we obtain that

$$Df^{\pi(q')}(q')=\left(
                     \begin{array}{ccc}
                       A^{\pi(q')}_s & 0 & 0 \\
                       0 & A^{\pi(q')}_c & 0 \\
                       0 & 0 & A^{\pi(q')}_u
                     \end{array}
                   \right),\quad A^{\pi(q')}_c=\left(
                                                                \begin{array}{cc}
                                                                  |\lambda^c(p)|^{\pi(q')} & 0 \\
                                                                  0 & |\lambda^c(p)|^{\pi(q')}
                                                                \end{array}
                                                              \right)
$$
It is easy to produce now a perturbation so that the eigenvalues of $F^c_{q'}$ are real and different.\fp
\section{Creation of Simple Cycles}\label{section.simple.cycle}
We may assume now that $f$ is a $C^r$ conservative diffeomorphism with a co-index cycle having real central eigenvalues. Our main result will be established if we prove Theorems \ref{teo.strong.homoclinic.intersection} and \ref{teo.blenders}. To do this, we shall perturb that in order to obtain a simplified model of the cycle, that is, a simple cycle. The creation of simple cycles follows the same arguments as in
 \cite[Proposition 3.5]{bonatti-diaz2006} and \cite[Lemma 3.2]{bdpr2003}. The only difference is that we shall use the Pasting Lemmas to linearize in the conservative setting. The goal of this section is to produce a simple cycle.\par
\begin{figure}[h] \label{figure.simple.cycle}
\psfrag{p}{\Small{$p$}}
\psfrag{Esp}{\Small{ $E^c_p$}}\psfrag{Essp}{\Small{ $E^{ss}_p$}}\psfrag{Eup}{ \Small{$E^{uu}_p$}}
\psfrag{Wsp}{\Small{ $W^{s}(p)$}}\psfrag{Wups}{\Small{ $W^{u}(p)$}}
\psfrag{q}{\Small{$q$}}\psfrag{Euq}{\Small{$E^{c}_q$}}\psfrag{Esq}{\Small{$E^{ss}_q$}}\psfrag{Euuq}{\Small{$E^{uu}_q$}}
\psfrag{Wuq}{\Small{$W^{u}(q)$}}\psfrag{Wsq}{\Small{$W^{s}(q)$}}
\includegraphics[width= 7cm]{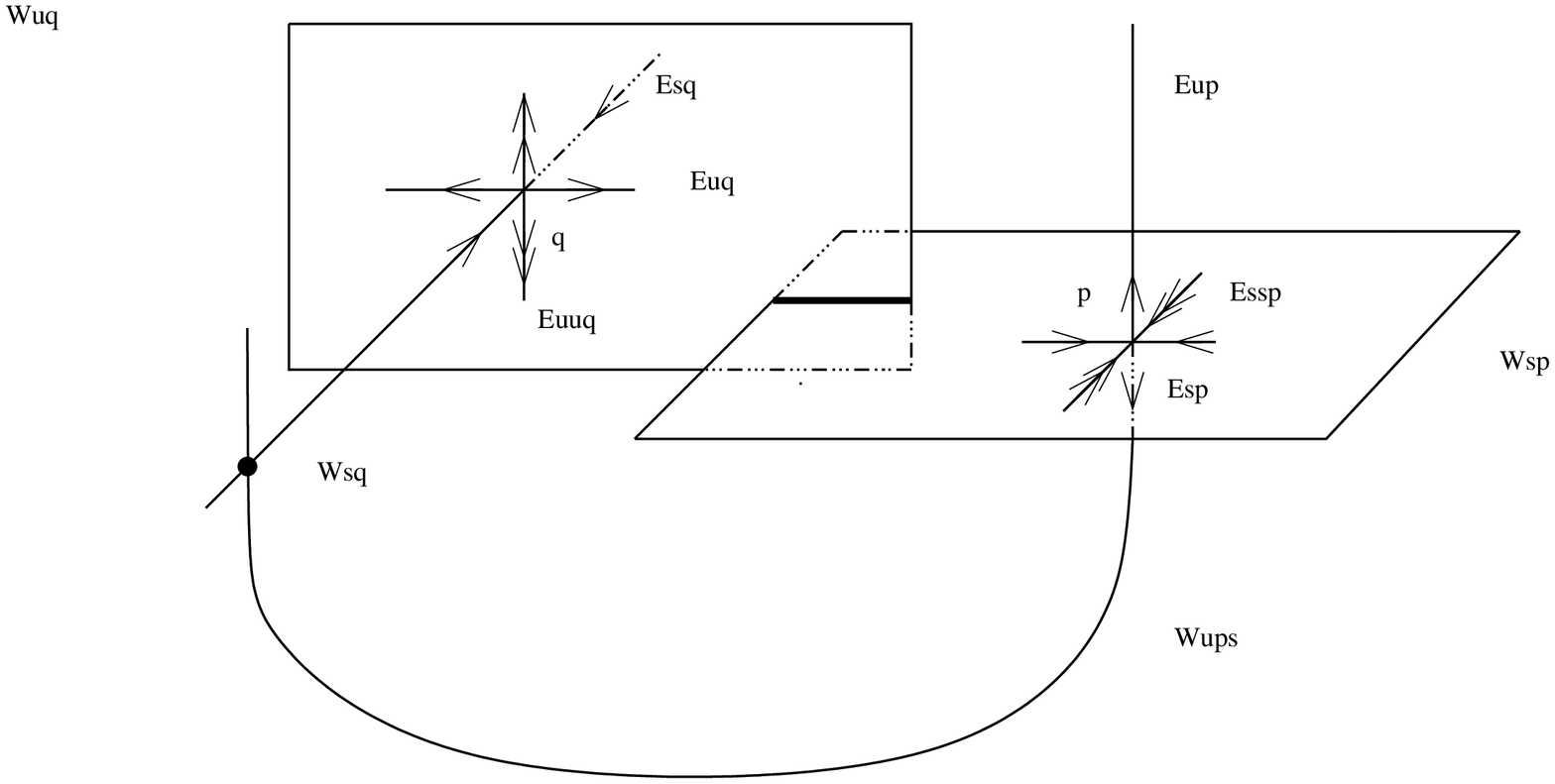}
\caption{}
\end{figure}

Let us suppose that $p$ and $q$ are hyperbolic fixed points of indices $(u+1)$ and $u$ respectively. We may also assume that $W^s(q)$ and $W^u(p)$ have a non-trivial transverse intersection, and $W^s(p)$ and $W^u(q)$ have a point of quasi transverse intersection. On $\{p,q\}$ we have a partially hyperbolic splitting $TM=E^s\oplus E^c\oplus E^u$, where $\dim E^u=u$, $\dim E^c=1$ and $\dim E^s=s$ with $s+u+1=n$. Using the Pasting Lemma for diffeomorphisms, we obtain two neighborhoods $U_p$ and $U_q$ on which we can linearize $f$. We call this new $C^r$ conservative diffeomorphism $g$. $g$ equals $Df(p)$ on $U_p$ and $Df(q)$ on $U_q$, the strong stable and unstable manifolds are, respectively, the $s$- and $u$-planes parallel to $E^s_p$ and $E^u_p$, or $E^s_q$ and $E^u_q$. The center lines parallel to $E^c_p$ and $E^c_q$ are also invariant under $g$. \par
We can choose $g$ so that there is a point of transverse intersection $X$ of $W^u(p)$ and $W^s(q)$, and a point of quasi-transverse intersection $Y$ of $W^s(p)$ and $W^u(q)$. There is a sufficiently large iterate $m>0$ so that $g^{-m}(Y)=(0,0,z_0)_q\in W^u_{loc}(q)$ and $g^m(Y)=(x_0,0,0)_p\in W^s_{loc}(p)$. Take $l=2m$. \par
Now, generically $W^u(p)\pitchfork W^s(q)$ is transverse to the strong unstable in $U_p$ and to the strong stable foliation in $U_q$. So, take a curve $\alpha\subset W^u(p)\pitchfork W^s(q)$ and iterates $m>0$ so large that
$g^{-m}(\alpha)$ is the graphic of a map $\gamma_p:I\to E^{uu}$, and $f^m(\alpha)$ is the graphic of a map $\gamma_q:J\to E^{ss}$. $I$ and $J$ are small segments contained, respectively in $W^c_{loc}(p)$ and $W^c_{loc}(q)$. Note that $g^{-m}(\alpha)$ approaches $W^c_{loc}(p)$ exponentially faster than it approaches $p$; analogously $g^m(\alpha)$ approaches $W^c_{loc}(q)$ exponentially faster than it approaches $q$.
Hence we can choose $m>0$ so large that $\gamma_p$ and $\gamma_q$ and are $C^1$-close to zero.
\par
Let us define $C^r$ vector fields $X_p$ and $X_q$ in suitable neighborhoods of $I\cup \graph(\gamma_p)$ and $J\cup\graph(\gamma_q)$. We define $X_p$ as a vector field that is constant
the hyperplanes parallel to $E^{ss} \oplus  E^{uu}$, such that $X_p(x,y,z)_p=\gamma_p(0,y,0)_p$. That is,
$X_p$ assigns to each point its center coordinate.
Since $X_p$ is constant along the hyperplanes parallel
to $E^{ss} \oplus  E^{uu}$ it is divergence free and it is very
close to the null vector field. Then we can apply the Pasting
Lemma for flows (Theorem \ref{pasting.lemma.para.campos}) and paste $S$ with the null vector field
obtaining a $C^r$-vector field $\bar{X}p$ that is $C^1$-close to the null vector field.
By composing our diffeomorphism with the time-one map of $\bar{X}_p$ we
have a perturbation of $g$ (which we continue to call $g$) such that $g^{-2m}(\graph(\gamma_q))$ is contained in $W^c_{loc}(p)$. Analogously, we obtain a $C^1$-perturbation, so that $g^{2m}(I)\subset W^c_{loc}(q)$.
In this way, we have obtained so far the points $(0,0,z_0)_q$, $(x_0,0,0)_p$, $(0,y_0^+,0)_p$ and $(0,y_0^-,0)_q$.\par
Let us apply the Pasting Lemma for diffeomorphisms again, so that we obtain
a new perturbation for which the transitions $\mathcal{T}_1=g^l|_V$ and $\mathcal{T}_2=g^l|_W$ are affine maps,
where $V$ and $W$ are small neighborhoods of $(0,0,z_0)_q$ and $(0,y_0^+,0)_p$ respectively. We loose no generality in assuming that the images of the hyperplanes $E^{ss}$, $E^{uu}$, $E^{c}$, $E^{ss}\oplus E^c$ and $E^{uu}\oplus E^c$
are in general position. By taking $l>0$ sufficiently large, one obtains that the image of the center-unstable foliation becomes very close to the $E^{uu}\oplus E^c$ in $U_p$. A small perturbation using the Pasting Lemma for vector fields like in the previous paragraph gives us an invariant center-unstable foliation. Indeed, there exists a matrix $A$ with $\det(A)=1$, close to the identity, taking the image of the center-unstable foliation in $W\subset U_p$ into the center-unstable foliation of $g^l(W)\subset U_q$. But now, there exists a vector field $\log A$ such that the time-one map of $\log A$ is $A$. We use the Pasting Lemma for vector fields to paste $\log A$ in a neighborhood of $(0,y_0^-,0)_q$ in $g^l(W)$ with the identity outside of $g^l(W)$. Composing $g$ with the time one map of this vector field, we get a $C^r$ diffeomorphisms $C^1$-close to $g$ such that $g^{l+1}$ leaves the center-unstable foliation invariant. We replace $g^l(W)$ by $g^{l+1}(W)$, and $\cT_2$ by this new affine transition.\par
Let us continue to call $g$ this new $C^r$ conservative diffeomorphism, and $\cT_2$ the new transition. In order to get the invariance of the strong stable foliation, note that by the previous construction, the center unstable foliation is preserved by backward iterations. The backward iterations of the strong stable foliation approach the strong stable foliation in $U_p$. If necessary, we may replace $(0,y_0^+,0)_p$ by a large backward iterate, and $W$ by a corresponding iterate in $U_p$. There is a matrix $B$ with $\det(B)=1$ close to the identity, that preserves the center-unstable foliation and is such that $B\circ g^L|_W$ preserves the strong stable and center-stable foliations. Proceeding as in the previous paragraph, we obtain a $C^1$-close diffeomorphism preserving these foliations. We can repeat now the same argument inside the center-unstable foliation in order to get the invariance of the strong unstable and the center foliations. In this way we obtain an affine partially hyperbolic transition $\cT_2$ preserving $E^{ss}$, $E^{uu}$ and $E^c$. Analogously we obtain $\cT_1$.\par
We only need to show that we can perturb in order to obtain that the transitions $\cT_1$ and $\cT_2$ are isometries on the center foliations. Now we can replace $\cT_1$ by $\cT_1(m_1,m_2)=Dg^{-m_2}(p)\circ \cT_1\circ Dg^{m_1}(q)$ with large $m_1,m_2>0$ on a suitable small neighborhood of $g^{-m_1}(0,0,z_0)_q$. There are infinitely many $m_1,m_2>0$ such that the center eigenvalues of $\cT_1(m_1,m_2)$ are in a bounded away from zero finite interval. Considering $m_1$ and $m_2$ sufficiently large, and changing $(0,0,z_0)_q$ by a point $X$ with coordinates of the same form, we obtain a $C^1$-perturbation in a small neighborhood of the segment of orbit $X$, $g(X),\dots, g^r(X)\in U_q$, where $r=m_1+l+m_2$, such that the action of $\cT_1(m_1,m_2)$ in the central direction is an isometry. The perturbation is
produced using the Pasting Lemma for vector fields as in the previous paragraphs. In analogous way we obtain a transition $\cT_2$ acting as an isometry in the central direction.\par
We have proved the following:
\begin{proposition}
Let $f$ be a $C^r$ conservative diffeomorphism having a co-index one cycle with real central eigenvalues associated to the points $p$ and $q$. Then $f$ can be $C^1$-approximated by $C^r$ conservative diffeomorphisms having simple cycles associated to $p$ and $q$.
\end{proposition}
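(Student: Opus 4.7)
The plan is to follow Bonatti--D\'\i az's construction of a simple cycle, replacing every use of a smooth linearization or a smooth bump perturbation by an application of the Arbieto--Matheus Pasting Lemmas (Theorems \ref{pasting.lemma.para.campos} and \ref{pasting.lemma.para.difeos}), so that conservativity is preserved throughout. Since the cycle already has real central eigenvalues and $\dim E^c=1$, the splitting $TM=E^{ss}\oplus E^c\oplus E^{uu}$ is well defined along the orbits of $p$ and $q$; my first step is to use the Pasting Lemma for diffeomorphisms in small neighbourhoods $U_p$ and $U_q$ to replace $f$ by a $C^r$ conservative $g$ which agrees on smaller subneighbourhoods with $Df^{\pi(p)}(p)$ and $Df^{\pi(q)}(q)$. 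After this, the local strong stable, strong unstable and centre manifolds coincide with the affine subspaces parallel to $E^{ss}$, $E^{uu}$ and $E^c$, so choosing coordinates $(x^s,y^c,z^u)_p$ and $(x^s,y^c,z^u)_q$ makes sense.

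Next, I fix a transverse point $X\in W^u(p)\pitchfork W^s(q)$ and a quasi--transverse point $Y\in W^s(p)\cap W^u(q)$. Choosing $m$ large, the backward $g^{-m}$--image of a small disc in $W^s(q)$ through $X$ (inside $U_p$) is $C^1$--close to a disc in $W^{uu}_{\mathrm{loc}}(p)$, and symmetrically for the forward iterate of a disc in $W^u(p)$ through $X$; likewise the $\pm m$--iterates of a small neighbourhood of $Y$ become $C^1$--close to affine pieces through $(0,0,z_0)_q$ and $(x_0,0,0)_p$. Here I use the Pasting Lemma for vector fields: the difference of the perturbed piece with the affine target, viewed as a vector field constant along the hyperplanes $E^{ss}\oplus E^{uu}$, is divergence free and $C^1$--small, so its time--one flow can be pasted with the identity outside a tiny neighbourhood. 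Composing $g$ with this time--one map gives a new conservative $g$ for which $\mathcal{T}_1=g^l|_V$ is already affine and takes $(0,0,z_0)_q$ to $(x_0,0,0)_p$. An analogous argument, using the transversality of $W^u(p)\pitchfork W^s(q)$ with the strong foliations, produces the affine $\mathcal{T}_2=g^r|_W$ and the heteroclinic segment $I$.

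The main obstacle is that a priori these affine transitions only send linear subspaces to linear subspaces in general position, not to the prescribed $E^{ss}$, $E^c$, $E^{uu}$ directions. The idea is to rotate foliations one at a time: by making $l$ large, the image under $\mathcal{T}_2$ of the centre--unstable foliation becomes $C^1$--close to $E^{uu}\oplus E^c$ in $U_q$, so there is a unimodular matrix $A$ close to the identity taking one to the other. Writing $A=\exp(\log A)$ and pasting the divergence--free vector field $\log A$ near $g^r(0,y_0^+,0)_p$ with $0$ outside a small ball, the time--one map composed with $g$ upgrades $\mathcal{T}_2$ to preserve the centre--unstable foliation. Iterating this trick inside the now--invariant centre--unstable leaves gives invariance of $E^{uu}$ and $E^c$, and then a symmetric argument (running time backwards, or replacing the reference point by a sufficiently old backward iterate so that the strong stable foliation has had time to straighten) produces invariance of $E^{ss}$; the same is done for $\mathcal{T}_1$. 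The verification that these successive perturbations can be kept conservative is exactly where the Pasting Lemma for vector fields replaces the usual bump functions.

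Finally, to upgrade the centre action of each transition to an isometry, I would replace $\mathcal{T}_1$ by $\mathcal{T}_1(m_1,m_2)=Dg^{-m_2}(p)\circ \mathcal{T}_1 \circ Dg^{m_1}(q)$ realised on a small neighbourhood of a point $g^{-m_1}(0,0,z_0)_q$ on the orbit of the heteroclinic point. Since $\lambda^c(q)$ is contracting and $\lambda^c(p)$ expanding, the centre eigenvalue of $\mathcal{T}_1(m_1,m_2)$ is $|\lambda^c(q)|^{m_1}\,\kappa\,|\lambda^c(p)|^{-m_2}$ for some constant $\kappa$; by a pigeonhole argument there are infinitely many pairs $(m_1,m_2)$ for which this lies in a fixed compact interval $[a,b]\subset(0,\infty)$, and a final Pasting Lemma for vector fields perturbation along the orbit segment adjusts the eigenvalue to $\pm 1$. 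The analogous adjustment for $\mathcal{T}_2$ finishes the construction and yields all four conditions of Definition \ref{definition.simple.cycle}.
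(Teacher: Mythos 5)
Your proposal is correct and follows essentially the same route as the paper: linearization near $p$ and $q$ via the Pasting Lemma for diffeomorphisms, adjustment of the heteroclinic pieces and straightening of the invariant foliations by pasting divergence-free vector fields (constant along $E^{ss}\oplus E^{uu}$ hyperplanes, respectively $\log A$ for a unimodular $A$ close to the identity), and finally the centre isometry obtained by conjugating the transition with large powers of the linear parts and a pigeonhole argument. No substantive differences from the paper's argument.
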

\section{Proof of Theorem \ref{teo.strong.homoclinic.intersection}}\label{section.strong.homoclinic.intersection}
%
For simplicity we shall assume that $p$ and $q$ are fixed points, and that the
co-index cycle is a simple cycle. We shall call $\lambda_c(p)=\mu$ and $\lambda_c(q)=\lambda$.
We shall assume that $\mu>1$ and $\lambda\in (0,1)$, since no greater complication appears in the cases $\mu<-1$ and $\lambda\in(-1,0)$.\par
Since the cycle is simple, there are coordinates $(x,y,z)_p$ and $(x,y,z)_q$ in suitable neighborhoods of $p=(0,0,0)_p$ and $(0,0,0)_q$ on which the expression of $f$ is:
\begin{equation}\label{equation.linear.expresion.f}A(x,y,z)_p=(A^sx,\mu y, A^uz)_p\qquad\mbox{and}\qquad B(x,y,z)_q=(B^sx,\lambda y, B^uz)_q
\end{equation}
where $A^s$, $B^s$ are contractions, and $A^u$, $B^u$ are expansions. We recall that there are points $(0,0,z_0)_q$ in the quasi-transverse intersection of $W^s(p)\cap W^u(q)$, and $(0,y^+,0)_p$ in the transverse intersection of $W^u(p)\pitchfork W^s(q)$ such that on suitable neighborhoods $V\subset U_q$ and $W\subset U_p$ the transitions $\cT_1=f^l|_V$ and $\cT_2=f^r|_W$ have the form:
\begin{equation}\label{equation.transition1}
\cT_1(x,y,z)_p=(T^s_1x,y+y^--y^+, T^u_1z)_q
\end{equation}
and
\begin{equation}\label{equation.transition2}
\cT_2(x,y,z)_q=(T^s_2x+x_0, y, T^u_2(z-z_0))_p
\end{equation}
where $T^s_1$, $T^s_2$ are contractions and $T^u_1$, $T^u_2$ are expansions.\newline \par
We shall produce a continuous family of perturbations $\{f_t\}_{t>0}$ of $f$ shifting the unstable manifold of $q$ in $U_p$ so that it does not intersect $W^{ss}(p)$, see Figure \ref{figure.perturbation}. These perturbations preserve the bundles $E^{ss}$, $E^c$ and $E^{uu}$.
In this form, they induce maps of the interval on $E^c$. By eventually changing the original $\lambda\in(0,1)$ and $\mu>1$ (so that $f_t$ continues to be conservative), and carefully choosing a small parameter $t>0$, we may obtain that the $E^{ss}\oplus E^{uu}$ plane containing the point $(0,y^+,0)_p$ is periodic by two different (large) itineraries. Now, the dynamics $f_t$ on this periodic plane is hyperbolic, then using a Markovian property, we get two periodic points that are homoclinically related within this plane. The $\lambda$-lemma gives now a periodic point with a strong homoclinic intersection.
\newline\par
Proceeding as in Section \ref{section.simple.cycle} we take a divergence-free vector field $X$ supported in a small neighborhood of $f^{r-1}(V)$, so that the composition $f_t$ of $f$ with the time-$t$ map of $X$ form a $C^1$-family of $C^r$ conservative diffeomorphisms admitting transitions $\cT_2$ of the form (\ref{equation.transition2}) and $\cT_{1,t}$ of the form:
\begin{equation}\label{equation.transition.t}
\cT_{1,t}(x,y,z)_p=\cT_1(x,y,z)_p+(0,t,0)_p
\end{equation}
where $\cT_1$ is as in formula (\ref{equation.transition1})
Since $Df_t(p)=Df(p)$ and $Df_t(q)=Df(q)$, the formulas (\ref{equation.linear.expresion.f}) hold for all small $t>0$. \par
\begin{figure}[h] \label{figure.perturbation}
\psfrag{p}{\Small{$p$}}
\psfrag{Esp}{\Small{ $E^c_p$}}\psfrag{Essp}{\Small{ $E^{ss}_p$}}\psfrag{Eup}{ \Small{$E^{uu}_p$}}
\psfrag{Wsp}{\Small{ $W^s(p)$}}\psfrag{Wups}{\Small{ $W^u(p)$}}
\psfrag{q}{\Small{$q$}}
\psfrag{Euq}{\Small{$E^c_q$}}\psfrag{Esq}{\Small{$E^{ss}_q$}}\psfrag{Euuq}{\Small{$E^{uu}_q$}}
\psfrag{Wuq}{\Small{$W^u(q)$}}\psfrag{Wsq}{\Small{$W^s(q)$}}\psfrag{t}{$t$}
\includegraphics[width= 8cm]{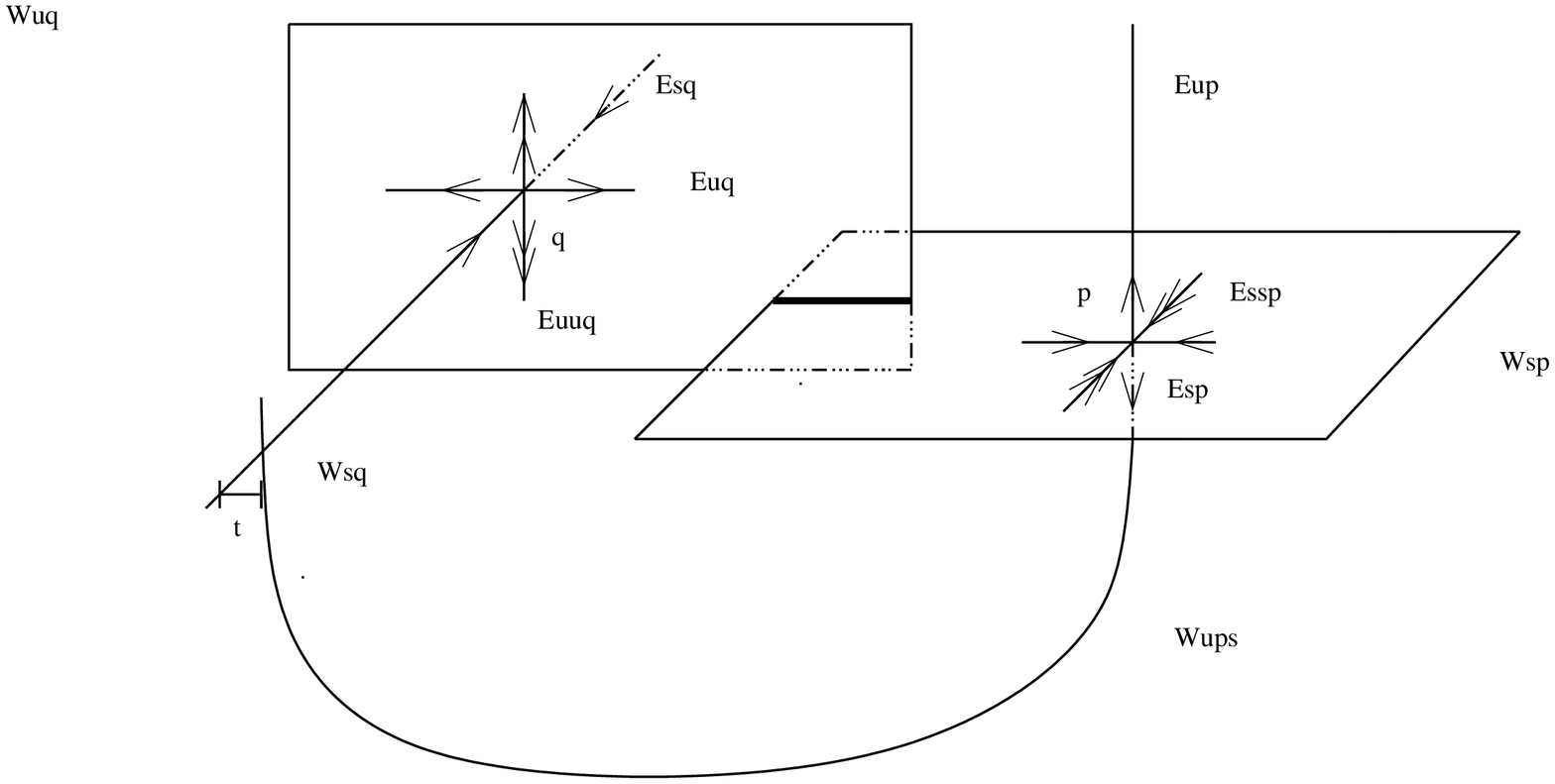}
\caption{}
\end{figure}

Note that if the composition $f_t^n\circ f_t^l\circ f_t^m\circ f_t^r$ makes sense for some point and takes a small neighborhood of $(0,y^+,0)_p$ into $U_p$ then, due to the formulas above, its center coordinate takes the form:
\begin{equation}\label{equation.center.coordinate}
\psi^{m,n}_t(y)=\mu^n\left[\lambda^m(y+\Delta y)+t\right]\qquad\mbox{where}\quad\Delta y=y^--y^+
\end{equation}
Conversely, we have the following:
\begin{lemma}\label{lemma.periodic.points.m.n}
If $m,n,t>0$ are such that $\psi^{m,n}_t(y^+)=y^+$ with sufficiently large $m,n$, then there is a point $$p_{m,n}=(x_{m,n},y^+,z_{m,n})_p$$ in the $E^{ss}\oplus E^{uu}$ plane through $(0,y^+,0)_p$ that is $f_t$-periodic, with period $\pi(p_{m,n})=n+l+m+r$. Its center eigenvalue is $\lambda^c(p_{m,n})=\mu^n\lambda^m$. \par
If there is $(m',n')\ne(m,n)$ for which $\psi^{m',n'}_t(y^+)=y^+$, then there is a point $q_{m,n}$, such that
$$q_{m,n}\subsetneq W^{uu}(p_{m,n})\cap W^{ss}(p_{m,n}).$$
That is, $p_{m,n}$ has a strong homoclinic intersection that is quasi-transverse.
\end{lemma}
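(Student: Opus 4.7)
The plan is to regard the composition
$$F_{m,n}=f_t^{\,n}\circ f_t^{\,l}\circ f_t^{\,m}\circ f_t^{\,r}$$
as an affine map defined on a small neighborhood of $(0,y^+,0)_p$ in $U_p$. Because $A$, $B$, $\cT_2$ and $\cT_{1,t}$ each preserve the partially hyperbolic splitting $E^{ss}\oplus E^c\oplus E^{uu}$ and act as an affine contraction on $E^{ss}$, an affine map on $E^c$ (whose expression in the central coordinate is $\psi_t^{m,n}$), and an affine expansion on $E^{uu}$, the composition splits accordingly:
$$F_{m,n}(x,y,z)_p=\bigl(F^s_{m,n}x+\alpha_{m,n},\;\psi_t^{m,n}(y),\;F^u_{m,n}z+\beta_{m,n}\bigr)_p,$$
with $\|F^s_{m,n}\|<1$ and $\|(F^u_{m,n})^{-1}\|<1$, both tending to zero as $m,n\to\infty$. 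The hypothesis $\psi_t^{m,n}(y^+)=y^+$ then implies that the $E^{ss}\oplus E^{uu}$-plane $\Pi:=\{y=y^+\}$ is locally $F_{m,n}$-invariant, and $F_{m,n}|_\Pi$ is a hyperbolic affine automorphism. The contraction mapping theorem applied to $F^s_{m,n}$ and to $(F^u_{m,n})^{-1}$ produces a unique fixed point $(x_{m,n},z_{m,n})\in\Pi$, yielding the periodic point $p_{m,n}=(x_{m,n},y^+,z_{m,n})_p$. Its period under $f_t$ is $n+l+m+r$ (and not a proper divisor thereof, since for large $m,n$ the orbit spends exactly the prescribed numbers of consecutive iterations inside $U_p$ and $U_q$ between transitions), and its central eigenvalue is $(\psi_t^{m,n})'(y^+)=\mu^n\lambda^m$ by differentiating formula (\ref{equation.center.coordinate}).

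For the second assertion, the same construction with $(m',n')$ in place of $(m,n)$ produces a second $f_t$-periodic point $p_{m',n'}\in\Pi$ that is the fixed point of $F_{m',n'}=f_t^{\,n'}\circ f_t^{\,l}\circ f_t^{\,m'}\circ f_t^{\,r}$. Both $F_{m,n}|_\Pi$ and $F_{m',n'}|_\Pi$ are affine hyperbolic maps on $\Pi$, contracting in $E^{ss}$ and expanding in $E^{uu}$. Taking $m,n,m',n'$ sufficiently large, I would exhibit a product rectangle $R=D^{ss}\times D^{uu}\subset\Pi$ containing both $p_{m,n}$ and $p_{m',n'}$ such that $F_{m,n}(R)$ and $F_{m',n'}(R)$ each cross $R$ horizontally---stretched along $E^{uu}$ and thin along $E^{ss}$---in the configuration of a Smale--Birkhoff horseshoe on two symbols. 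The maximal invariant set of the iterated function system $\{F_{m,n},F_{m',n'}\}$ on $R$ is then a hyperbolic basic set conjugate to the full two-shift, containing $p_{m,n}$ and $p_{m',n'}$ as the two fixed points; within such a set any two periodic orbits are homoclinically related by the standard Markovian property.

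Applying the $\lambda$-lemma inside $\Pi$ to this homoclinic pair then yields, for $F_{m,n}|_\Pi$, a transverse (in $\Pi$) intersection point
$$q_{m,n}\in\bigl(W^u_\Pi(p_{m,n})\cap W^s_\Pi(p_{m,n})\bigr)\setminus\{p_{m,n}\}.$$
Since $\Pi$ contains the full $E^{ss}_{p_{m,n}}$ and $E^{uu}_{p_{m,n}}$ directions through $p_{m,n}$, the local stable (respectively unstable) manifold of $p_{m,n}$ inside $\Pi$ coincides with $W^{ss}(p_{m,n})$ (respectively $W^{uu}(p_{m,n})$), so in fact $q_{m,n}\in W^{uu}(p_{m,n})\cap W^{ss}(p_{m,n})$. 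Since $\dim W^{ss}+\dim W^{uu}=s+u=\dim\Pi$, a transverse intersection inside $\Pi$ corresponds to a quasi-transverse intersection in $M$: the tangent subspaces intersect trivially but their sum is only $T_{q_{m,n}}\Pi$, missing the $E^c$ direction. The main obstacle is the horseshoe covering step: one must verify that the translation parts of $F_{m,n}$ and $F_{m',n'}$ place the images $F_{m,n}(R)$ and $F_{m',n'}(R)$ so as to simultaneously cross $R$ in the correct Markovian configuration; this becomes automatic thanks to the exponentially strong contraction of $F^s_{m,n}$ and expansion of $F^u_{m,n}$ once $m,n,m',n'$ are large enough.
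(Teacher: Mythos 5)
Your proposal is correct and follows essentially the same route as the paper: a hyperbolic affine return map on the invariant $E^{ss}\oplus E^{uu}$ plane $\{y=y^+\}$ (the paper phrases the fixed-point step via Markovian crossing of cylinders $C^u$, $C^s$ rather than the contraction mapping theorem), homoclinic relation of $p_{m,n}$ and $p_{m',n'}$ within that plane, the $\lambda$-lemma to produce $q_{m,n}$, and quasi-transversality from the dimension count $s+u=\dim M-1$. The only cosmetic difference is that your two-symbol horseshoe is more machinery than the paper needs---by linearity the strong stable and unstable manifolds of the two periodic points are full coordinate $s$- and $u$-discs in the plane, so they cross automatically.
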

\begin{proof}
Let $t>0$ be a small parameter, and let $m,n>0$ be sufficiently large. Suppose that $\psi^{m,n}_t(y^+)=(y^+)$. Then it is easy to see that the $f_t^{n+l+m+r}$ image of the $su$-disc $[-1,1]^s\times\{y^+\}\times[-1,1]^u$ contains a cylinder of the form $$C^u=B^s_\delta((A^s)^nx_0)\times \{y^+\}\times [-1,1]^u,$$ for some sufficiently small $\delta>0$, where $B^s_\delta(x)$ denotes the $s$-disc of radius $\delta$ centered at $x$.\par
Analogously, we obtain that the $f_t^{n+l+m+r}$ pre-image of the $su$-disc $[-1,1]^s\times\{y^+\}\times[-1,1]^u$ contains a cylinder of the form $$C^s=[-1,1]^s\times \{y^+\}\times B^u_\delta(z),$$
for some $z\in (-1,1)^u$ and some suitable small $\delta$, which can be taken equal to the previous one. $B^u_\delta(z)$ denotes the $u$-disc of radius $\delta$ centered at $z$. This implies the existence of a periodic point $p_{m,n}$ of period $\pi(p_{m,n})=n+l+m+r$. The fact that the transitions are isometries on the bentral direction implies that $\lambda^c(p_{m,n})=\lambda^m\mu^n$.\par
If $(m',n')\ne(m,n)$ are such that $\psi^{m',n'}_t(y^+)=y^+$, then the previous argument gives us a periodic point $p_{m',n'}$ which, by construction, is different from $p_{m,n}$. Due to linearity, the unstable manifolds of $p_{m,n}$, $p_{m',n'}$ are, respectively, the $u$-discs $W^{uu}(p_{m,n})=(x_{m,n},y^+)\times [-1,1]^u$ and $W^{uu}(p_{m',n'})=(x_{m',n'},y^+)\times [-1,1]^u$. Also, $W^{ss}(p_{m,n})=[-1,1]^s\times(y^+,z_{m,n})$ and $W^{ss}(p_{m',n'})=[-1,1]^s\times(y^+,z_{m',n'})$. The $u$-discs transversely intersect the $u$-discs in the $su$-plane. Therefore, $p_{m,n}$ and $p_{m',n'}$ are homoclinically related in the $su$-plane, that is
$$W^{uu}(p_{m',n'})\cap W^{ss}(p_{m,n})\ne\emptyset\quad\mbox{and}\quad W^{uu}(p_{m,n})\cap W^{ss}(p_{m',n'})\ne\emptyset$$
The $\lambda$-lemma implies the existence of a point $q\ne p_{m,n}$ in the intersection of the strong stable and strong unstable manifolds of $p_{m,n}$. The expression of $f_t$ in $U_p$ implies that this intersection is quasi-transverse.
\end{proof}
The proof of Theorem \ref{teo.strong.homoclinic.intersection} will be finished after the following:
\begin{lemma}[\cite{bonattidiazviana1995}]
For any $\eps>0$ there are $\mu_0,\lambda_0,t,m,n,n'$ such that $|\lambda_0-\lambda|<\eps$, $|\mu_0-\mu|<\eps$, $|t|<\eps$ and $m,n$ arbitrarily large with $n<n'$ and
\begin{enumerate}
\item $\psi^{m+1,n}_t(y^+)=y^+$
\item $\psi^{m,n'}_t(y^+)=y^+$
\end{enumerate}
\end{lemma}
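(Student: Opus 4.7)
The plan is to reduce conditions (1) and (2) to a single resonance equation by eliminating $t$, then solve it by an equidistribution argument combined with a final micro-adjustment of the eigenvalues. Using $y^+ + \Delta y = y^-$, formula (\ref{equation.center.coordinate}) rewrites (1) and (2) as $\mu_0^n[\lambda_0^{m+1}y^- + t] = y^+$ and $\mu_0^{n'}[\lambda_0^m y^- + t] = y^+$. Solving each for $t$ and equating yields the single relation
\[
\lambda_0^m \mu_0^n \;=\; C_k(\mu_0,\lambda_0) \;:=\; \frac{y^+\,(1 - \mu_0^{-k})}{(-y^-)\,(1 - \lambda_0)}, \qquad k := n' - n \geq 1,
\]
which in logarithms reads $n\log \mu_0 - m\log(1/\lambda_0) = \log C_k(\mu_0,\lambda_0)$, a linear Diophantine-type equation in $(m,n) \in \mathbb{Z}_{+}^2$.

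The strategy is to fix $k = 1$ and find, for any prescribed $\eps > 0$, parameters $\mu_0, \lambda_0$ within $\eps$ of $\mu, \lambda$ and arbitrarily large $m, n$ satisfying this equation exactly. First, by density of irrationals, adjust $\mu_0, \lambda_0$ by less than $\eps/2$ so that $\log \mu_0/\log(1/\lambda_0)$ is irrational; conservativity of $f_t$ is preserved by making compensating (equally small) adjustments in the transverse hyperbolic eigenvalues of $A^s, A^u, B^s, B^u$, which does not affect the center dynamics at $p$ and $q$. Write $\alpha := \log \mu_0$ and $\beta := \log(1/\lambda_0)$. By Kronecker/Weyl equidistribution the sequence $\{n\alpha/\beta \bmod 1\}_{n\geq 1}$ is dense in $[0,1)$, so for any $\delta > 0$ there exist arbitrarily large $n$, with $m$ equal to the nearest integer to $(n\alpha - \log C_1)/\beta$ also arbitrarily large, such that $|n\alpha - m\beta - \log C_1(\mu_0,\lambda_0)| < \delta$. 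A final perturbation of $\lambda_0$ of magnitude $\sim \delta/m$ shifts the left-hand side by $\sim \delta$ (dominant $m\beta$ contribution) while altering $\log C_1$ only by $\sim \delta/m$, thereby converting the approximate solution into an exact one and still keeping $|\lambda_0 - \lambda| < \eps$.

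Once the resonance equation is satisfied exactly, the value of $t$ is determined by the first fixed-point condition,
\[
t \;=\; y^+ \mu_0^{-n} - \lambda_0^{m+1} y^-,
\]
and since $\mu_0 > 1$ and $\lambda_0 \in (0,1)$, both terms tend to $0$ as $m, n \to \infty$; hence $|t| < \eps$ is automatic for sufficiently large $m, n$, with $n' = n+1 > n$ as required. The main obstacle is the Diophantine step: we need the integer linear form $n\alpha - m\beta$ to hit an explicit target value $\log C_1(\mu_0,\lambda_0)$ exactly, with both $m$ and $n$ arbitrarily large. What makes this tractable is that we have one more free parameter than equations (namely $\mu_0, \lambda_0, t$ against two resonance constraints), so the density/equidistribution gives an approximate solution and the final micro-perturbation of $\lambda_0$ promotes it to an exact one without spoiling any of the $\eps$-closeness requirements.
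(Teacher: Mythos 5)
The paper gives no proof of this lemma at all --- it is quoted from the literature and the text immediately below it only says ``This is proved in Lemma 3.11 of \cite{bonatti-diaz2006}.'' So there is no internal argument to compare against; judged on its own, your proof is correct and is the standard argument for this kind of ``double itinerary'' lemma. The elimination of $t$ is right: with $y^++\Delta y=y^-$, equating the two values of $t$ forced by (1) and (2) yields exactly $\lambda_0^m\mu_0^n=C_k(\mu_0,\lambda_0)$ with $k=n'-n$, and since $y^+>0>y^-$, $\mu_0>1$, $\lambda_0\in(0,1)$, the constant $C_k$ is positive so the logarithmic form makes sense. The two-step solution --- make $\log\mu_0/\log(1/\lambda_0)$ irrational, use Kronecker density to get $|n\log\mu_0-m\log(1/\lambda_0)-\log C_1|<\delta$ with $m,n$ arbitrarily large, then an intermediate-value micro-perturbation of $\lambda_0$ of size $O(\delta/m)$, which moves $m\log(1/\lambda_0)$ by $O(\delta)$ while moving $\log C_1$ only by $O(\delta/m)$ --- is sound, and $t=y^+\mu_0^{-n}-\lambda_0^{m+1}y^-$ is indeed positive and tends to $0$, so $|t|<\eps$ and $t>0$ as the family $\{f_t\}_{t>0}$ requires.

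One caveat, not about the lemma itself but about how the paper uses it two lines later. Your computation shows that any simultaneous solution of (1) and (2) forces $\lambda_0^m\mu_0^n=C_{n'-n}(\mu_0,\lambda_0)$, so the product $\mu_0^n\lambda_0^m$ is \emph{not} a free quantity; the paper's subsequent instruction to ``take $n,m$ so that $\mu_0^n\lambda_0^m>1$'' really amounts to arranging $C_k>1$ for some admissible $k$ (and suitable heteroclinic data $y^\pm$). By fixing $k=1$ you discard that degree of freedom. Since $C_k$ increases in $k$ toward $y^+/\bigl((-y^-)(1-\lambda_0)\bigr)$, you should keep $k=n'-n$ as a parameter (your argument goes through verbatim for any fixed $k$) so that the downstream expansion condition on the center eigenvalue can be met.
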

This is proved in Lemma 3.11 of \cite{bonatti-diaz2006}. The proof of Theorem \ref{teo.strong.homoclinic.intersection} now ends by taking $n,m$ so that $\mu_0^n\lambda_0^{m}>1$. With the techniques used in Section \ref{section.simple.cycle} we can produce a $C^1$ perturbation so that to obtain a $C^r$ conservative diffeomorphism where the linear expressions (\ref{equation.linear.expresion.f}) are such that the center coordinate expansions are, respectively $\lambda_0$ and $\mu_0$. \newline\par
Theorem \ref{teo.blenders} has been proved in \cite{bonattidiazviana1995} and in Theorem 2.1 of \cite{bonattidiaz1996}, see also Section 4.1 of \cite{bonatti-diaz2006}. Note that the perturbations can be trivially made so that the resulting diffeomorphism be $C^r$ and conservative.


\begin{thebibliography}{RRRRRR}
%
\bibitem{arbieto-matheus2006} A. Arbieto, C. Matheus, A pasting lemma and some applications for conservative
systems, {\it Erg. Th. \& Dyn. Sys.} {\bf 27}, 5 (2007) 1399-1417.
%
\bibitem{arnaud2001} M.C. Arnaud,Cr\'eation de connexions en topologie {$C\sp 1$},
    {\it Erg. Th. \& Dyn. Systems}, {\bf 21},   2  (2001),339-381.
%
\bibitem{avila2008} A. \'Avila, On the regularization of conservative maps, {\em preprint} (2008) {\tt arXiv:0810.1533}
%
\bibitem{boncrov2004} C. Bonatti, S. Crovisier, R\'ecurrence et g\'en\'ericit\'e,
{\em Inv. Math.}, {\bf 158}, 1 (2004), 33-104.
%
\bibitem{bonattidiaz1996} C. Bonatti, L. D\'{\i}az, Persistent nonhyperbolic transitive
diffeomorphisms. {\em Ann. Math.} (2) {\bf 143} (1996), no. 2,
357-396.
%
\bibitem{bonatti-diaz2006} C. Bonatti, L. D\'{\i}az, Robust
heterodimensional cycles and $C^1$-generic dynamics, {\em Journal of
the Inst. of Math. Jussieu} {\bf 7} (3) (2008) 469-525.
%
\bibitem{bdp2003} C. Bonatti, L. D\'{\i}az, E. Pujals, A $C^1$-generic dichotomy for diffeomorphisms:
Weak forms of hyperbolicity or infinitely many sinks or sources, {\it Ann. Math.} {\bf 158} (2003) 355-418.
%
\bibitem{bdpr2003} C. Bonatti, L. D\'{\i}az, E. Pujals, J. Rocha, Robustly transitive sets and heterodimensional
cycles, {\it Ast\'erisque} {\bf 286} (2003) 187-222.
%
\bibitem{bonattidiazviana1995}  C. Bonatti,  L. D\'{\i}az,  M. Viana,
Discontinuity of the Hausdorff dimension of hyperbolic sets, {\it C.
R. Acad. Sci. Paris S\'{e}r. I Math.} {\bf 320} (1995), no. 6, 713-718.
%
\bibitem{bonattidiazvianalibro} C. Bonatti, L. D\'{\i}az, M. Viana, {\em Dynamics beyond uniform hyperbolicity. A global geometric and probabilistic perspective.} Encyclopaedia of Mathematical Sciences, 102. Mathematical Physics, III. Springer-Verlag, Berlin, 2005.
%
\bibitem{diaz.pujals.ures1999} L. D\'{\i}az, E. Pujals, R. Ures, Partial hyperbolicity and robust transitivity, {\em Acta Math}, {\bf 183} (1999) 1-43.
%
\bibitem{rhrhtu2007} F. Rodriguez Hertz, M. Rodriguez Hertz, A.
Tahzibi, R. Ures, A criterion for ergodicity of non-uniformly
hyperbolic diffeomorphisms, {\it ERA-MS}  {\bf 14}, (2007) 74-81.
%
\bibitem{rhrhtu2009} F. Rodriguez Hertz, M. Rodriguez Hertz, A. Tahzibi, R. Ures, New criteria for ergodicity and non-uniform hyperbolicity, {\em preprint}
%
\end{thebibliography}
\end{document}